\long\def\forget#1{}
\newcounter{commentcounter}
\def\?{\ 
{\bf\color{red}???}\ 
\immediate\write16{}
\immediate\write16{Warning: There was still a question mark . . . }
\immediate\write16{}}
\newtheorem{Satz}{Satz}[section]		
\newtheorem{Lemma}[Satz]{Lemma} 
\newtheorem{Corollary}[Satz]{Corollary}
\newtheorem{Remark/Definition}[Satz]{Remark/Definition}
\newtheorem{Proposition}[Satz]{Proposition}
\newtheorem{Theorem}[Satz]{Theorem}
\theoremstyle{definition}
\newtheorem{Remark}[Satz]{Remark}
\newtheorem{Definition}[Satz]{Definition}
\newtheorem{Bemerkung/Definition}[Satz]{Bemerkung/Definition}
\newcommand{\boo}{\vphantom{A}}
\renewcommand{\phi}{\varphi}
\renewcommand{\theta}{\vartheta}
\renewcommand{\subset}{\subseteq}
\renewcommand{\:}{\colon}
\renewcommand{\r}{r}
\newcommand{\IF}{\mathbb{F}}
\newcommand{\IP}{\mathbb{P}}
\newcommand{\IZ}{\mathbb{Z}}
\newcommand{\bA}{\mathbf{A}} 
\newcommand{\bS}{\mathbf{S}} 
\newcommand{\bT}{\mathbf{T}}
\newcommand{\cA}{\mathcal{A}} 
\newcommand{\cC}{\mathcal{C}}
\newcommand{\cF}{\mathcal{F}}
\newcommand{\cM}{\mathcal{M}} 
\newcommand{\cO}{\mathcal{O}}
\newcommand{\fA}{\mathfrak{A}} 
\newcommand{\fa}{\mathfrak{a}} 
\newcommand{\fB}{\mathfrak{B}} 
\newcommand{\fC}{\mathfrak{C}} 
\newcommand{\fJ}{\mathfrak{J}} 
\newcommand{\fX}{\mathfrak{X}}
\newcommand{\uM}{{\underline{M\!}\,}}
\newcommand{\ucM}{\underline{\mathcal{M}}}
\newcommand{\uMhut}{{\underline{\hat M\!}\,}}
\newcommand{\ucMhut}{\underline{\hat{\mathcal M}}}
\newcommand{\ucN}{{\underline{\mathcal{N}\!}\,}}
\newcommand{\oF}{{\,\overline{\!F}}}
\newcommand{\Mhut}{\hat{M}}
\newcommand{\sigquer}{\bar{\sig}}
\newcommand{\piinv}{1/\pi}
\newcommand{\idm}{\mathfrak{m}}
\newcommand{\ida}{\mathfrak{a}}
\newcommand{\idb}{\mathfrak{b}}
\newcommand{\eps}{\varepsilon}
\newcommand{\sig}{\sigma}
\newcommand{\jac}{\mathfrak{j}}
\newcommand{\inv}{{-1}}
\newcommand{\pl}{\underleftarrow{\lim}}
\newcommand{\epspi}{(\eps,\pi)}
\newcommand{\pfeil}{\rightarrow}
\newcommand{\gehtauf}{\mapsto}
\newcommand{\haken}{\hookrightarrow}
\newcommand{\lb}{\llbracket}
\newcommand{\rb}{\rrbracket}
\newcommand{\lbc}{(\!(} 
\newcommand{\rbc}{)\!)}
\newcommand{\la}{\langle}
\newcommand{\ra}{\rangle}
\def\isoto{\stackrel{}{\mbox{\hspace{1mm}\raisebox{+1.4mm}{$\scriptstyle\sim$}\hspace{-3.5mm}$\longrightarrow$}}}
\DeclareMathOperator{\Spec}{Spec}
\newcommand{\Frac}{\text{\textup{Frac}}}
\newcommand{\alg}{\text{\textup{alg}}}
\newcommand{\Gal}{\text{\textup{Gal}}}
\newcommand{\GL}{\text{\textup{Gl}}}
\newcommand{\id}{\text{\textup{id}}}
\newcommand{\coker}{\text{\textup{coker}}}
\newcommand{\an}{\text{\textup{an}}}
\newcommand{\Frob}{\text{\textup{Frob}}}
\newcommand{\Hom}{\text{\textup{Hom}}}
\newcommand{\im}{\text{\textup{im}}}
\newcommand{\Spf}{\text{\textup{Spf}}}
\newcommand{\Spm}{\text{\textup{Sp}}}
\newcommand{\cha}{\text{\textup{char}}}
\newcommand{\rk}{\text{\textup{rk}}}
\newcommand{\rig}{\text{\textup{rig}}}
\newcommand{\ord}{\text{\textup{ord}}}
\newcommand{\Diag}{\text{\textup{Diag}}}
\newcommand{\FMod}{\text{\textup{FMod}}}
\title{A criterion for good reduction of Drinfeld modules \\ and Anderson motives in terms of local shtukas}
\author{U.~Hartl, S.~H\"usken}
\date{}
\begin{document}

\maketitle

\vspace{-1cm}
\begin{abstract}
\noindent
For an Anderson $A$-motive over a discretely valued field whose residue field has $A$-characteristic $\eps$, we prove a criterion for good reduction in terms of its associated local shtuka at $\eps$. This yields a criterion for good reduction of Drinfeld modules. Our criterion is the function-field analog of Grothendieck's~\cite[Proposition~IX.5.13]{SGA7} and de Jong's~\cite[2.5]{dJ98} criterion for good reduction of an abelian variety 
over a discretely valued field with residue 
characteristic $p$ in terms of its associated $p$-divisible group
.

\noindent
{\it Mathematics Subject Classification (2000)\/}: 
11G09,  
(14L05)  
\end{abstract}

\tableofcontents

\section{Introduction}

We fix a finite field $\IF$ with $\r$ elements and characteristic $p$. Let $\cC$ be a smooth projective and geometrically irreducible curve over $\IF$ with function field $Q=\IF(\cC)$. Let $\infty\in \cC$ be a closed point and let $A=\Gamma(\cC\smallsetminus\{\infty\},\cO_\cC)$ be the $\IF$-algebra of those rational functions on $\cC$ which are regular outside $\infty$. For every $\IF$-algebra $R$ we let $\sig$ be the endomorphism of $A_R:=A\otimes_\IF R$ given by $\sig:=\id_A\otimes{\rm Frob}_{\r,R}\:a\otimes b\gehtauf a\otimes b^\r$ for $a\in A$ and $b\in R$.

Let $o_L$ be a complete discrete valuation ring containing $\IF$, with fraction field $L$, uniformizing parameter $\pi$, maximal ideal $\idm_L=(\pi)$ and residue field $\ell=o_L/\idm_L$. We assume that $\ell$ is a finite field extension of $\ell^p$. This is equivalent to saying that $\ell$ has a finite $p$-basis over $\ell^p$ in the sense of \cite[\S\,V.13, Definition~1]{BourbakiA47}. It holds for example if $\ell$ is perfect, or if $\ell$ is a finitely generated field. Since every Anderson $A$-motive over $L$ can be defined over a finitely generated subfield of $L$ our restriction on $\ell$ is not serious. Let $c^*\:A\to o_L$ be a homomorphism of $\IF$-algebras such that the kernel of the composition $A\pfeil o_L\twoheadrightarrow\ell$ is a \emph{maximal} ideal $\eps$ in $A$. We say that \emph{the residue field $\ell$ has finite $A$-characteristic $\eps$}. We do not assume that $c^*\:A\to o_L$ is injective. So $L$ can have either generic $A$-characteristic $\ker c^*=(0)$ or finite $A$-characteristic $\ker c^*=\eps$. In the following we will consider various $A_{o_L}$-algebras. In all of them we consider the ideal generated by $\{a\otimes 1-1\otimes c^*(a)\: a\in A\}\subset A_{o_L}$. By abuse of notation we denote all these ideals by $\fJ$.

By an \emph{Anderson $A$-motive over $L$} we mean a pair $\uM=(M,F_M)$ consisting of a locally free $A_L$-module $M$ of finite rank, and an injective $A_L$-homomorphism $F_M\:\sig^*M\pfeil M$ where $\sig^*M:=M\otimes_{A_L,\sig}A_L$, such that $\coker(F_M)$ is a finite dimensional $L$-vector space and is annihilated by a power of $\fJ$. We say that $\uM$ has \emph{good reduction over $o_L$} if there exists a locally free $A_{o_L}$-module $\cM$ and an injective $A_{o_L}$-homomorphism $F_\cM\:\sig^*\cM\pfeil\cM$ such that $(\cM,F_\cM)\otimes_{A_{o_L}}A_L\cong\uM$ and $\coker(F_\cM)$ is a finite free $o_L$-module which is annihilated by a power of $\fJ$. We call $\ucM=(\cM,F_{\cM})$ a \emph{good model of $\uM$}. 
In particular if $\uM=\uM(\phi)$ is the Anderson $A$-motive associated with a Drinfeld $A$-module $\phi$ over $L$, then $\uM$ has good reduction if and only if $\phi$ has good reduction; see Proposition~\ref{PropGoodRedDriMod}.

Anderson $A$-motives are function-field analogs of abelian varieties. For an abelian variety $\cA$ over a discretely valued field $K$ with residue field of characteristic $p$ there are criteria for good reduction in terms of local data. For a prime number $l\ne p$ the criterion of N\'eron-Ogg-Shavarevich \cite[\S1, Theorem 1]{Serre-Tate} states that $\cA$ has good reduction if and only if the $l$-adic Tate module $T_l\cA$ of $\cA$ is unramified as a $\Gal(K^\alg/K)$-representation. At the prime $p$ the criterion of Grothendieck \cite[Proposition~IX.5.13]{SGA7} (for $\cha(K)=0$), respectively de Jong~\cite[2.5]{dJ98} (for $\cha(K)=p$) states that $\cA$ has good reduction if and only if the Barsotti-Tate group $\cA[p^\infty]$ has good reduction.

These criteria have function-field analogs for Anderson $A$-motives. The analog of the N\'eron-Ogg-Shavarevich-criterion was proved by Gardeyn~\cite[Theorem 1.1]{Gardeyn2}. In this article we simultaneously prove the analog of Grothendieck's and de Jong's criterion. Here the function-field analogs of Barsotti-Tate groups are local shtukas \cite[\S\,2.1]{HHabil} which are defined as follows. Let $A_{o_L,\epspi}$ be the $\epspi$-adic completion of $A_{o_L}$. An (\emph{effective}) \emph{local shtuka at $\eps$ over $o_L$} is a pair $\uMhut=(\Mhut,F_{\Mhut})$ consisting of a finite free $A_{o_L,\epspi}$-module $\Mhut$ and an injective $A_{o_L,\epspi}$-homomorphism $F_{\Mhut}\:\sig^*\Mhut\pfeil\Mhut$ such that $\coker(F_{\Mhut})$ is a finite free $o_L$-module and is annihilated by a power of $\fJ$. The local shtuka associated with a good model $\ucM$ of an Anderson $A$-motive is $\uMhut(\ucM):=\ucM\otimes_{A_{o_L}}A_{o_L,\epspi}$. Strictly speaking effective local shtukas are the function field analogs of the $F$-crystals of Barsotti-Tate groups. The analogs of the latter are called \emph{$\eps$-divisible local Anderson-modules} and their category is equivalent to the category of effective local shtukas; see \cite{HS} for more details. Our analog of Grothendieck's and de Jong's reduction criterion is now the following

\medskip\noindent
{\bfseries Corollary~\ref{CorGoodRedCrit}.}
{\it Let $\uM$ be an Anderson $A$-motive over $L$. Then $\uM$ has good reduction over $o_L$ if and only if there is an effective local shtuka $\uMhut$ at $\eps$ over $o_L$ and an isomorphism $\uM \otimes_{A_L} A_{o_L,\epspi}[\piinv]\cong \uMhut\otimes_{A_{o_L,\epspi}}A_{o_L,\epspi}[\piinv]$.
}

\medskip

(In the body of the text we prove a slightly stronger statement.) This applies in particular if $\uM$ is the Anderson $A$-motive associated with a Drinfeld module $\phi$ over $L$ to give a criterion for good reduction of $\phi$ in terms of its associated local shtuka. The reformulation of this criterion in terms of the $\eps$-divisible local Anderson-module of $\phi$ is given in \cite{HS}.

\medskip

{\bfseries Acknowledgements.} We would like to thank the anonymous referee for his careful reading and for asking an interesting question which lead to the answer given in Remark~\ref{RemNotEquiv}. We also thank the Deutsche Forschungsgemeinschaft for supporting this research in form of SFB 878.

\section{The base rings}\label{The base rings}

Let $o_L$ be an equi-characteristic complete discrete valuation ring containing the finite field $\IF$, with quotient field $L=\Frac(o_L)$ and residue field $\ell=o_L/\idm_L$, where $\idm_L\subset o_L$ is the maximal ideal of $o_L$. We assume that $\ell$ is a \emph{finite} field extension of $\ell^p:=\{b^{\,p}\:b\in\ell\}$. We fix a uniformizer $\pi=\pi_L$ of $o_L$ and sometimes identify $o_L$ with $\ell\lb\pi\rb$. Let $v=v_\pi=\ord_\pi(\cdot)$ be the discrete valuation on $L$ normalized by $v(\pi)=1$. 

We assume that there is an $o_L$-valued point $c\in \cC(o_L)$ such that the corresponding $\IF$-morphism $c\:\Spec(o_L)\pfeil \cC$ factors via $\cC\smallsetminus\{\infty\}\subset \cC$. Such a datum corresponds to a homomorphism of $\IF$-algebras $c^*\: A\pfeil o_L$ which we call the \emph{characteristic map}. We further assume that the closed point $V(\pi)\subset \Spec(o_L)$ is mapped to a closed point $\eps$ of $\Spec(A)\subset \cC$. The latter is the kernel of the composition $A\pfeil o_L\twoheadrightarrow \ell$. So, in accordance with Drinfeld's terminology \cite{DrinfeldEM}, we call $\eps$ the \emph{residue characteristic} or \emph{residual characteristic place of $Q$}. 
By continuity, the characteristic map $c^*\: A\pfeil o_L$ factors through a morphism of complete discrete valuation rings $A_\eps\pfeil o_L$ where $A_\eps$ is the completion of $A$ at the characteristic place $\eps$. Note that $A_\eps\pfeil o_L$ is injective if $c^*$ is injective, and factors through $A/\eps$ if $c^*$ is not injective.

\begin{Remark}\label{t}
Since $A$ is a Dedekind domain there is a power $\eps^m$ which is a principal ideal in $A$. We fix a generator $t$ of $\eps^m$ and frequently use the finite flat monomorphism of $\IF$-algebras $\iota\:\IF[z]\pfeil A, z\mapsto t$.
\end{Remark}

For any $\IF$-algebra $R$ we abbreviate $A_R:=A\otimes_\IF R$. In particular, $A_{o_L}\subset A_L$ is a noetherian integral domain, and by virtue of the equality $A_\ell\cong  A_{o_L}/\pi A_{o_L}$ it follows that $\pi\in o_L$ is a prime element of $A_{o_L}$. 

\begin{Definition} Let $A_{o_L,\pi}$ (resp., $A_{o_L,\epspi}$) be the completion of the $o_L$-algebra $A_{o_L}$ for the $\pi$-adic topology (resp., the $(\eps,\pi)$-adic topology).
\end{Definition}

By Krull's Theorem (\cite{BourbakiCA}, III.3.2), the ring $A_{o_L}$ is separated for both the $\pi$-adic and the $\epspi$-adic topology. 
The topological $o_L$-algebra $A_{o_L,\pi}$ is admissible in the sense of Raynaud, i.e.\ it is of topologically finite presentation and has no $\pi$-torsion. In particular, the $L$-algebra $A_{o_L,\pi}[\piinv]$ is affinoid in the sense of rigid analytic geometry; see \cite{SFB378,FRG1,BGR}.

For example if $\cC=\IP_\IF^1$ and $A=\IF[z]$ then we have $A_{o_L}= o_L[z]$ and correspondingly $A_L= L[z]$. Let us specify that $\eps=z\IF[z]$. Our choice of a uniformizer $\pi$ gives rise to an identification $o_L=\ell\lb \pi\rb$. Consequently $o_L\lb z\rb=\ell\lb\pi\rb\lb z\rb=\ell\lb \pi,z\rb=A_{o_L,\epspi}$.
On the other hand, the $\pi$-adic completion of $o_L[z]$ equals $o_L\la z\ra:=\{\sum\limits_{i=0}^\infty b_iz^i\: v(b_i)\pfeil\infty (i\pfeil\infty)\}$, and since $L\la z\ra=o_L\la z\ra\otimes_{o_L}L$, we may view $A_{o_L,\pi}[\piinv]$ as a replacement, for general $\cC$, of the Tate algebra $L\la z\ra$ of strictly convergent power series in one indeterminate $z$ over $L$, which serves as coordinate ring for the one-dimensional affinoid unit ball in rigid analytic geometry. 

There is a natural embedding $A_L\pfeil A_{o_L,\pi}[\piinv]$ which, for general $\cC$, replaces the completion homomorphism $L[z]\pfeil L\la z\ra$, and which itself can be regarded as a completion map with respect to the $L$-algebra norm-topology on the \emph{reduced} affinoid $L$-algebra $A_{o_L,\pi}[\piinv]$ and its restriction on $A_L$; see \cite[\S1.4, Proposition~19]{SFB378}. 
Note that the canonical homomorphism $A_{o_L}\pfeil A_{o_L,\epspi}$ factors uniquely via $A_{o_L,\pi}$, where the induced map $A_{o_L,\pi}\pfeil A_{o_L,\epspi}$ identifies $A_{o_L,\epspi}$ with the $\epspi A_{o_L,\pi}$-adic completion of $A_{o_L,\pi}$.
Since $A_{o_L,\pi}$ is a regular integral domain, it is $\epspi A_{o_L,\pi}$-adically separated by Krull's theorem and $A_{o_L,\pi}\pfeil A_{o_L,\epspi}$ is injective and flat.

Recall that there is a finite flat monomorphism of $\IF$-algebras $\iota\: \IF[z]\pfeil A$ which identifies the indeterminate $z$ with the generator $t\in A$ of $\eps^m$ chosen in Remark~\ref{t}. The $o_L$-algebra homomorphism $\iota\otimes \id\: o_L[z]\pfeil A_{o_L}$, $\sum_\nu a_\nu z^\nu\gehtauf \sum_\nu t^\nu\otimes a_\nu$, is finite flat, so that we obtain finite flat maps
\begin{equation}\label{CProj-Abb}
o_L\la z\ra\pfeil A_{o_L,\pi},\quad L\la z\ra\pfeil A_{o_L,\pi}[\piinv],\quad o_L\lb z\rb\pfeil A_{o_L,(t,\pi)}, \quad \ell[z]\pfeil A_\ell.
\end{equation}
Here the $(t,\pi)$-adic completion  $A_{o_L,(t,\pi)}$ of $A_{o_L}$ equals $A_{o_L,\epspi}$ since $(\eps,\pi)^m\subset(\eps^m,\pi)=(t,\pi)$ in $A_{o_L}$.

\begin{Lemma}
If $A_{o_L,\eps}$ denotes the $\eps$-adic completion of $A_{o_L}$, the canonical map $A_{o_L,\eps}\pfeil A_{o_L,\epspi}$ is an isomorphism. \qed
\end{Lemma}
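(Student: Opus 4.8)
The plan is to present both completions as inverse limits over a convenient directed set and then to exploit that $o_L$ is $\pi$-adically complete. The crucial input is a finiteness statement: since $\eps\subseteq A$ is a maximal ideal of the Dedekind domain $A$, and $A$ is a finitely generated $\IF$-algebra, the residue field $A/\eps$ is a finite extension of $\IF$ and $\eps^i/\eps^{i+1}\cong A/\eps$ for every $i$; hence each $A/\eps^n$ is a finite-dimensional $\IF$-vector space. As $o_L$ is flat over $\IF$, tensoring $0\to\eps^n\to A\to A/\eps^n\to 0$ with $o_L$ gives
\[
M_n\;:=\;A_{o_L}/\eps^nA_{o_L}\;\cong\;(A/\eps^n)\otimes_\IF o_L ,
\]
a finite free $o_L$-module. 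Being finite and free over the complete discrete valuation ring $o_L$, each $M_n$ is $\pi$-adically separated and $\pi$-adically complete, i.e.\ $\varprojlim_k M_n/\pi^kM_n=M_n$.

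Next I would compare the $\epspi$-adic topology on $A_{o_L}$ with the linear topology defined by the family of ideals $\{\eps^nA_{o_L}+\pi^kA_{o_L}\}_{n,k\geq0}$. These coincide, since $\eps^NA_{o_L}+\pi^NA_{o_L}\subseteq(\eps,\pi)^NA_{o_L}$, while a generator $\eps^i\pi^j$ of $(\eps,\pi)^{n+k}$ with $i+j=n+k$ lies in $\eps^nA_{o_L}$ when $i\geq n$ and in $\pi^kA_{o_L}$ when $i<n$, so $(\eps,\pi)^{n+k}A_{o_L}\subseteq\eps^nA_{o_L}+\pi^kA_{o_L}$. Hence, using also that the two ideal families are mutually cofinal,
\[
A_{o_L,\epspi}\;=\;\varprojlim_{(n,k)\in\IN\times\IN}A_{o_L}\big/\big(\eps^nA_{o_L}+\pi^kA_{o_L}\big)\;=\;\varprojlim_{(n,k)}M_n/\pi^kM_n ,
\]
whereas $A_{o_L,\eps}=\varprojlim_nM_n$ by definition, and the canonical map $A_{o_L,\eps}\to A_{o_L,\epspi}$ is induced by the projections $\varprojlim_m M_m\to M_n\to M_n/\pi^kM_n$.

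Finally I would compute the double limit by interchanging the two inverse limits (a limit over $\IN\times\IN$ is the iterated limit) and invoking the $\pi$-adic completeness of each $M_n$ from the first step:
\[
A_{o_L,\epspi}\;=\;\varprojlim_n\big(\varprojlim_kM_n/\pi^kM_n\big)\;=\;\varprojlim_nM_n\;=\;A_{o_L,\eps} .
\]
Tracking the projections shows this chain of identifications is precisely the canonical homomorphism, which is therefore an isomorphism; equivalently, since the diagonal is cofinal in $\IN\times\IN$ one may write $A_{o_L,\epspi}=\varprojlim_nM_n/\pi^nM_n$, and then the canonical reduction map $\varprojlim_nM_n\to\varprojlim_nM_n/\pi^nM_n$ is injective because every $M_n$ is $\pi$-adically separated and surjective because every $M_n$ is $\pi$-adically complete. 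I do not expect a genuine obstacle: the only points needing a little care are the equivalence of the two linear topologies and the verification that the isomorphism produced is the canonical one, both of which are routine once the finiteness of $A/\eps^n$ over $\IF$ — equivalently, the finite freeness of $M_n$ over $o_L$ — has been recorded.
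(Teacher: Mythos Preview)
Your argument is correct. The paper states this lemma without proof (the \qed\ appears immediately after the statement), so there is nothing to compare against; your write-up supplies exactly the routine details the authors omit. The essential point is the one you isolate: since $A/\eps^n$ is a finite $\IF$-vector space, each quotient $A_{o_L}/\eps^nA_{o_L}\cong(A/\eps^n)\otimes_\IF o_L$ is finite free over the complete ring $o_L$ and hence already $\pi$-adically complete, so passing from the $\eps$-adic to the $(\eps,\pi)$-adic completion changes nothing. Your cofinality check between the filtrations $\{(\eps,\pi)^N\}$ and $\{\eps^n+\pi^k\}$ is fine; this is the same mechanism the paper uses just above the lemma to identify the $(t,\pi)$-adic and $(\eps,\pi)$-adic completions.
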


\section{Frobenius modules}\label{Frobenius}

The $\r$-Frobenius $\Frob_{\r}\: o_L\pfeil o_L$, $x\gehtauf x^\r$, gives rise to an endomorphism $$\sig=\id_A\otimes\Frob_{\r}\: A_{o_L}\pfeil A_{o_L},\quad a\otimes x\gehtauf a\otimes x^{\r},$$
which extends to give a map $\id_A\otimes \Frob_{\r,L}\: A_L\pfeil A_L$ again denoted by $\sig$. On the other hand, reducing mod $\pi$ gives $\sigquer=\id_A\otimes\Frob_{\r,\ell}\: A_\ell\pfeil A_\ell$. The latter is a finite flat endomorphism of the Dedekind domain $A_\ell$, because $\ell$ is finite over $\ell^p$. 
The map $\sig\: A_{o_L}\pfeil A_{o_L}$ is $\pi$-adically and $\epspi$-adically continuous and therefore extends to give endomorphisms $A_{o_L,\pi}\pfeil A_{o_L,\pi}$ and $A_{o_L,\epspi}\pfeil A_{o_L,\epspi}$, again denoted by $\sig$. 

\begin{Lemma}\label{Frob-cocart}
In the commutative diagram
$$\xymatrix{A_{o_L} \ar[r] \ar[d]_\sig & A_{o_L,\pi} \ar[r] \ar[d]_\sig & A_{o_L,\epspi} \ar[d]_\sig\\
A_{o_L} \ar[r] & A_{o_L,\pi} \ar[r] & A_{o_L,\epspi} }$$
both squares are cocartesian, and the vertical arrows are finite flat. 
\end{Lemma}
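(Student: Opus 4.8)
\emph{Proof plan.}

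My plan is to reduce the statement to three facts: that $\sig$ is already finite free on $A_{o_L}$; that over a noetherian ring the completion along an ideal commutes with finite base change (a standard fact; see e.g.\ \cite{BourbakiCA}); and the pasting law for cocartesian squares of rings.

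\emph{Step 1 (the left vertical arrow).} I would first check that $\sig\:A_{o_L}\pfeil A_{o_L}$ is finite free, repeating --- with $o_L$ in place of $\ell$ --- the argument underlying the finite flatness of $\sigquer$ on $A_\ell$ recalled in the text. Writing $\r=p^f$, the hypothesis $[\ell:\ell^p]<\infty$ gives $[\ell:\ell^\r]=[\ell:\ell^p]^f<\infty$; fixing an $\ell^\r$-basis $u_1,\dots,u_s$ of $\ell$ and using the identification $o_L=\ell\lb\pi\rb$, the set $\{u_j\pi^k\:1\le j\le s,\ 0\le k<\r\}$ is a basis of $o_L$ viewed as a module over itself via $\Frob_\r$. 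Hence $\Frob_\r\:o_L\pfeil o_L$ is finite free, and applying $A\otimes_\IF(-)$ gives the same for $\sig=\id_A\otimes\Frob_\r\:A_{o_L}\pfeil A_{o_L}$; in particular it is finite flat.

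\emph{Step 2 (left square and outer rectangle cocartesian).} The left square is cocartesian precisely when the canonical map $A_{o_L,\pi}\otimes_{A_{o_L},\sig}A_{o_L}\pfeil A_{o_L,\pi}$, the tensor product being over $A_{o_L}$ with the right factor an $A_{o_L}$-module via $\sig$, is an isomorphism. By Step~1 this right factor is finitely generated over the noetherian ring $A_{o_L}$, so completion commutes with the base change and the source is the $\pi$-adic completion of that module, which equals $\varprojlim_n A_{o_L}/\sig(\pi)^nA_{o_L}=\varprojlim_n A_{o_L}/\pi^{\r n}A_{o_L}$ as $\sig(\pi)=\pi^\r$; since $(A_{o_L}/\pi^{\r n}A_{o_L})_n$ is cofinal in $(A_{o_L}/\pi^mA_{o_L})_m$ this limit is $A_{o_L,\pi}$, and a routine verification identifies the composite with the canonical map. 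The same argument, now with $\epspi$ in place of $\pi A_{o_L}$ and using that $\sig$ fixes $\eps\subset A$ and raises $\pi$ to the $\r$-th power --- so that $\sig$ carries $\epspi$ to an ideal generating $(\eps,\pi^\r)$, which still defines the $(\eps,\pi)$-adic topology --- shows that the outer rectangle is cocartesian.

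\emph{Step 3 (conclusion).} By the pasting law the right square, the complement of the left square in the outer rectangle, is cocartesian as well. Finally, by Step~2 the middle, respectively the right, vertical arrow is the base change of the finite free map $\sig\:A_{o_L}\pfeil A_{o_L}$ along $A_{o_L}\pfeil A_{o_L,\pi}$, respectively $A_{o_L}\pfeil A_{o_L,\epspi}$, hence is again finite free and a fortiori finite flat. I expect Step~2 to be the only delicate point: one must verify that $\sig$, although it rescales the ideal cutting out the completion, preserves the induced topology, which is exactly what makes completion commute with the $\sig$-twisted finite base change; everything else is formal.
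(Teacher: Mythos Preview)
Your proof is correct and follows essentially the same route as the paper: establish finite freeness of $\sig$ on $A_{o_L}$ via an explicit $\Frob_\r$-basis of $o_L=\ell\lb\pi\rb$, then use that completion of a noetherian ring commutes with finite base change together with the cofinality of $(\pi^{\r n})$ in $(\pi^n)$ (resp.\ of $(\eps,\pi^\r)^n$ in $(\eps,\pi)^n$) to get the left square and the outer rectangle cocartesian. The only difference is that you make the pasting-law deduction for the right square explicit, whereas the paper leaves this step implicit.
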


We let the proof be preceded by the following

\medskip
\noindent
\textit{Remark}. Via the identification $o_L=\ell\lb\pi\rb$, the $\r$-Frobenius $\Frob_{\r,o_L}\: o_L\pfeil o_L$ is mirrored by the map $\ell\lb\pi\rb\pfeil \ell\lb\pi\rb$, $\sum\limits_{\nu=0}^{\infty}a_\nu \pi^\nu\gehtauf\sum\limits_{\nu=0}^{\infty}a_{\nu}^\r\pi^{\r\nu}$. Choosing an $\ell^r$-basis of $\ell$ and lifting it to a subset $W$ of $o_L$, this implies $(\Frob_{\r,o_L})_*o_L=\bigoplus\limits_{i=0}^{\r-1}\bigoplus\limits_{w\in W}o_L\,w\pi^i$, so that $\Frob_{\r,o_L}\: o_L\pfeil o_L$ is finite flat.

\begin{proof}[Proof of Lemma \ref{Frob-cocart}]
By base change the remark implies that $\sig=\id_A\otimes\Frob_{\r,o_L}\: A_{o_L}\pfeil A_{o_L}$ is finite flat, and that $A_{o_L}\otimes_{\sig,A_{o_L}}A_{o_L,\pi}$ is a finite flat $A_{o_L,\pi}$-module and hence equals the $\pi$-adic completion of the $A_{o_L}$-module $\sig_*A_{o_L}$. If we let $\ida=\sig(\pi A_{o_L})A_{o_L}=\pi^\r A_{o_L}$ and $\idb=\pi A_{o_L}$, we get $\idb^\r=\ida\subset\idb$. Consequently, by \cite[Lemma 7.14]{EisCA}, the inverse systems $(A_{o_L}/\ida^n)_n$ and $(A_{o_L}/\idb^n)_n$ give the same limit, which shows that the square on the left is cocartesian, and that $\sig\: A_{o_L,\pi}\pfeil A_{o_L,\pi}$ is finite flat. Similarly, we have $\sig\epspi A_{o_L}=(\eps,\pi^\r)\subset\epspi$ as well as $\epspi^\r\subset (\eps,\pi^\r)$, which proves that the displayed diagram qualifies $A_{o_L,\epspi}$ as tensor product $A_{o_L,\epspi}\otimes_{A_{o_L},\sig}A_{o_L}$, and that $\sig\: A_{o_L,\epspi}\pfeil A_{o_L,\epspi}$ is finite flat. 
\end{proof}

Finally, note that the embedding of $o_L$-algebras $\iota\otimes\id \: o_L[z]\pfeil A_{o_L}$ commutes with $\sig\: A_{o_L}\pfeil A_{o_L}$ and the $\r$-Frobenius lift of $o_L[z]$, given by $o_L[z]\pfeil o_L[z],\; \sum_\nu a_\nu z^\nu\gehtauf\sum_\nu a_\nu^\r z^\nu.$
Consequently, also the embeddings from \eqref{CProj-Abb} are Frobenius-equivariant.

\bigskip

Let $B$ be an $o_L$-algebra together with a ring endomorphism $\sig\:B\pfeil B$ such that $\sig$ and $\Frob_{\r,o_L}\: o_L\pfeil o_L$ are compatible with the structure map $o_L\pfeil B$. For example, $B$ could be any of the base rings considered above.
\begin{Definition}
We define the category $\FMod(B)$ of \emph{Frobenius $B$-modules} (or simply \emph{$F$-modules} over $B$) as follows: 
\begin{itemize}
\item An object of $\FMod(B)$ is a pair $\uM=(M,F)$ consisting of a $B$-module $M$ which is locally free of finite rank, together with an \emph{injective} $B$-linear map $F=F_{M}\:\sig^*M\pfeil M$, where $\sig^*M:=M\otimes_{B,\sig}B$. 
\item A \emph{morphism} of Frobenius $B$-modules $(M,F_{M})\pfeil (N,F_{N})$ is a $B$-linear map $\phi\:M\pfeil N$ between the underlying $B$-modules such that $\phi$ is \emph{$F$-equivariant}, i.e.\ such that $\phi\circ F_{M}=F_{N}\circ \sig^*\phi$. 
It is called an \emph{isomorphism} if $\phi$ is an isomorphism of the underlying $B$-modules. 
\end{itemize}
\end{Definition}

Let $B'$ be a flat $B$-algebra together with a ring endomorphism $\sig\: B'\pfeil B'$ extending the Frobenius lift of $B$, as explained before. Then the exact functor $\cdot\otimes_{B}B'$ from $B$-modules to $B'$-modules yields a functor $\FMod(B)\pfeil \FMod(B')$. If the structure map $B\pfeil B'$ is, in addition, injective then the induced functor on $\FMod(B)$ is faithful since, given a map $f\:  M\pfeil  N$ of finite projective $B$-modules, restricting its image $f\otimes\id\:  M\otimes_{B}B'\pfeil  N\otimes_{B}B'$ to $ M$ gives back $f$. In particular, we obtain a natural commutative diagram of categories and faithful functors
$$\xymatrix{\FMod(A_{o_L}) \ar[r] \ar[d] & \FMod(A_{o_L,\pi}) \ar[d] \ar[r] & \FMod(A_{o_L,\epspi})\ar[d] \\
\FMod(A_L) \ar[r] & \FMod(A_{o_L,\pi}[\piinv]) \ar[r] & \FMod(A_{o_L,\epspi}[\piinv])}$$

Slightly abusing notation, we agree to write $\uM\otimes_B B'$ for $(M\otimes_B B', F_M\otimes \id_{B'})$, whenever $\uM=(M,F_M)$. 

\section{Anderson motives}\label{AAM}

Let $\fJ\subset A_{o_L}$ be the ideal generated by $a\otimes 1- 1\otimes c^*(a)$ for all $a\in A$. For example, if $\cC=\IP_\IF^1$ and $A=\IF[z]$, then  $\fJ=(z-\zeta)\subset o_L[z]$ where $\zeta=c^*(z)$. Note that the convention introduced in Remark~\ref{t} that $(z)=\eps^m$ implies $\zeta\in\idm_L$. So $\zeta=0$ if $c^*$ is not injective. By abuse of notation we denote the ideal generated by $\fJ$ in any $A_{o_L}$-algebra again by $\fJ$.
We consider the following variant of Anderson's~\cite{Anderson} $t$-motives.

\begin{Definition}
An \emph{Anderson $A$-motive over $L$} is an object $\uM=(M,F_M)\in\FMod(A_L)$ such that $\coker(F_{ M})$ is a finite-dimensional $L$-vector space and is annihilated by a power of $\fJ$. A \emph{morphism} of Anderson $A$-motives is defined as a morphism inside $\FMod(A_L)$.
\end{Definition}

Since $\Spec(A_L)$ is of finite type over $L$, one can consider its rigid analytification $\Spec(A_L)^\an$; see \cite{SFB378}, \cite{BGR}, \cite{FvdP}. In accordance with \cite{BH}, we denote this rigid analytic $L$-space by $\fA(\infty)$. On the other hand, the formal completion of the $o_L$-scheme $X=\Spec(A_{o_L})$ along its special fiber $V(\pi)$ leads to the formal $o_L$-scheme $\fX=\Spf(A_{o_L,\pi})$; see \cite[I$_{\rm new}$, I.10.8.3]{EGA}. Its associated rigid analytic space $\fX_\rig$ (\cite{SFB378}, \cite{FvdP}) is given by the affinoid $L$-space $\fA(1):=\Spm(A_{o_L,\pi}[\piinv])$. This space can be regarded as the unit disc of the rigid analytic space $\fA(\infty)$ as it corresponds to ``radius of convergence $1$'', hence the notation.

We study the following instance of rigid analytic $\tau$-sheaves over $A_{o_L,\pi}[\piinv]$, in the sense of \cite{BH}.

\begin{Definition}
An \emph{analytic Anderson $A(1)$-motive} over $L$ is an object $\uM=(M,F_M)\in\FMod(A_{o_L,\pi}[\piinv])$ such that $\coker(F_M)$ is a finite-dimensional $L$-vector space and is annihilated by a power of $\fJ$. A \emph{morphism} of analytic Anderson $A(1)$-motives is defined as a morphism in the category $\FMod(A_{o_L,\pi}[\piinv])$. 
\end{Definition}

Here the prefix ``$A(1)$-'' indicates that we are considering an ana\-lytic variant of Anderson $A$-motives over the rigid analytic ``unit disc'' $\fA(1)$ in $\Spec(A_L)$.

\begin{Proposition}\label{ExpAlgTMot}
The natural functor $\FMod(A_L)\pfeil \FMod(A_{o_L,\pi}[\piinv])\,,\,\uM\gehtauf\uM\otimes_{A_L}A_{o_L,\pi}[\piinv]$ restricts to a functor $(\text{Anderson }A\text{-motives over }L)\pfeil (\text{analytic Anderson }A(1)\text{-motives over }L)$.\hfill\qed
\end{Proposition}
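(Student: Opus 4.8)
The statement concerns only the two extra conditions in the definition of an Anderson $A$-motive, so the plan is as follows. Let $\uM=(M,F_M)$ be an Anderson $A$-motive over $L$. By the functoriality recorded in Section~\ref{Frobenius} the base change $\uM\otimes_{A_L}A_{o_L,\pi}[\piinv]=\bigl(M\otimes_{A_L}A_{o_L,\pi}[\piinv],\,F_M\otimes\id\bigr)$ is already an object of $\FMod(A_{o_L,\pi}[\piinv])$: the transition map $A_L\to A_{o_L,\pi}[\piinv]$ is flat (e.g.\ because $A_{o_L}\to A_{o_L,\pi}$ is the $\pi$-adic completion of the noetherian ring $A_{o_L}$ and hence flat, $A_{o_L,\pi}\to A_{o_L,\pi}[\piinv]$ is a localization, and $A_L=A_{o_L}[\piinv]$), so that $M\otimes_{A_L}A_{o_L,\pi}[\piinv]$ stays locally free of finite rank and, after the usual identification $\sig^*M\otimes_{A_L}A_{o_L,\pi}[\piinv]\cong\sig^*\bigl(M\otimes_{A_L}A_{o_L,\pi}[\piinv]\bigr)$, the map $F_M\otimes\id$ stays injective. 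It therefore only remains to show that $\coker(F_M\otimes\id)$ is again a finite-dimensional $L$-vector space annihilated by a power of $\fJ$.

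By right exactness of $\cdot\otimes_{A_L}A_{o_L,\pi}[\piinv]$ we have $\coker(F_M\otimes\id)=\coker(F_M)\otimes_{A_L}A_{o_L,\pi}[\piinv]$. Put $V:=\coker(F_M)$; by hypothesis $V$ is a finite-dimensional $L$-vector space annihilated by $\fJ^nA_L$ for some $n\ge 1$, hence a finitely generated module over $A_L/\fJ^nA_L$. Then $\coker(F_M\otimes\id)$ is clearly annihilated by $\fJ^nA_{o_L,\pi}[\piinv]$, so the only genuine point is finiteness over $L$. Since $\coker(F_M\otimes\id)=V\otimes_{A_L/\fJ^nA_L}\bigl(A_{o_L,\pi}[\piinv]/\fJ^nA_{o_L,\pi}[\piinv]\bigr)$, it suffices to prove that the canonical map $A_L/\fJ^nA_L\to A_{o_L,\pi}[\piinv]/\fJ^nA_{o_L,\pi}[\piinv]$ is an isomorphism; geometrically, that the ``fat point'' $V(\fJ^n)\subset\Spec A_L$ sits inside the affinoid unit disc $\fA(1)=\Spm A_{o_L,\pi}[\piinv]$.

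To see this I would use that $\fJ$ is by definition the kernel of the surjection $c^*\colon A_{o_L}\twoheadrightarrow o_L$, i.e.\ the ideal of the $o_L$-valued section $\Spec o_L\to\Spec A_{o_L}=(\cC-\{\infty\})_{o_L}$. In particular $A_{o_L}/\fJ A_{o_L}=o_L$, and each $\fJ^i/\fJ^{i+1}$ is a finitely generated $o_L$-module, so $A_{o_L}/\fJ^nA_{o_L}$ is a finitely generated $o_L$-module and is therefore $\pi$-adically complete. Because $A_{o_L}$ is noetherian, $\pi$-adic completion commutes with the finite quotient by $\fJ^n$, whence $A_{o_L,\pi}/\fJ^nA_{o_L,\pi}=A_{o_L}/\fJ^nA_{o_L}$; inverting $\pi$ (an exact operation) gives $A_{o_L,\pi}[\piinv]/\fJ^nA_{o_L,\pi}[\piinv]=(A_{o_L}/\fJ^nA_{o_L})\otimes_{o_L}L=A_L/\fJ^nA_L$, a finite-dimensional $L$-vector space. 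Substituting back yields $\coker(F_M\otimes\id)=V$ as $A_L/\fJ^nA_L$-modules, so it is a finite-dimensional $L$-vector space annihilated by $\fJ^n$, and $\uM\otimes_{A_L}A_{o_L,\pi}[\piinv]$ is an analytic Anderson $A(1)$-motive. The one step that needs genuine care is the interchange of $\pi$-adic completion with the quotient by $\fJ^n$ — equivalently, the fact that the support of $\coker(F_M)$ really does lie in the rigid-analytic unit disc $\fA(1)\subset\fA(\infty)$ — and this rests on $c^*$ taking values in the valuation ring $o_L$, not merely in $L$. In the model case $\cC=\IP^1_\IF$, $A=\IF[z]$ it becomes transparent: there $\fJ=(z-\zeta)$ with $\zeta=c^*(z)\in o_L$, so $z-\zeta$ is a linear factor admissible on the unit ball and $L\la z\ra/(z-\zeta)^n=L[z]/(z-\zeta)^n$; the argument above is just the coordinate-free version of this.
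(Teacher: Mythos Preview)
Your argument is correct. The paper gives no proof of this proposition (it is marked with a \qed\ and left to the reader), so there is nothing to compare against; your write-up is a sound and complete verification of what the authors regard as immediate, the key point being exactly the one you isolate: since $c^*$ lands in $o_L$, the closed subscheme $V(\fJ^n)$ is supported in the unit disc, i.e.\ $A_{o_L,\pi}[\piinv]/\fJ^n\cong A_L/\fJ^n$.
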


\begin{Definition}\label{DefModAndMotiv}
(a) Let $\uM_L\in\FMod(A_L)$ be an $F$-module over $A_L$. A \emph{model} of $\uM_L$ is a pair $(\ucM,\alpha)$ consisting of an object $\ucM\in\FMod(A_{o_L})$ and an isomorphism $\alpha\:\uM_L\isoto\ucM\otimes_{A_{o_L}}A_L$ inside $\FMod(A_L)$.

\smallskip\noindent
(b) Let $\uM_L\in\FMod(A_{o_L,\pi}[\piinv])$ be an $F$-module over $A_{o_L,\pi}[\piinv]$. A \emph{(formal) model} of $\uM_L$ is a pair $(\ucM,\alpha)$ consisting of an object $\ucM\in\FMod(A_{o_L,\pi})$ and an isomorphism $\alpha\:\uM_L\isoto \ucM\otimes_{A_{o_L,\pi}}A_{o_L,\pi}[\piinv]$ inside $\FMod(A_{o_L,\pi}[\piinv])$.

\smallskip\noindent
(c) In both cases a \emph{morphism} of models $\beta\:(\ucM,\alpha)\to(\ucM',\alpha')$ is an isomorphism $\beta\:\ucM\isoto\ucM'$ of $F$-modules satisfying $\alpha'=\beta[\piinv]\circ\alpha$. In particular the sets $\Hom\bigl((\ucM,\alpha),(\ucM',\alpha')\bigr)$ contain at most one element.\\[1mm]
We will sometimes drop the $\alpha$ from the notation and simply speak of $\ucM$ as a model of $\uM_L$.
\end{Definition}

For every $\ucM\in\FMod(A_{o_L})$, resp.\ $\ucM\in\FMod(A_{o_L,\pi})$ we can consider the reduction $\ucM\otimes_{A_{o_L}}A_\ell$, resp.\ $\ucM\otimes_{A_{o_L,\pi}}A_\ell$. Note, however, that this does \emph{not} induce a functor from $\FMod(A_{o_L})$, resp.\ $\FMod(A_{o_L,\pi})$ to $\FMod(A_\ell)$, since the induced $F$-map need not be injective. This circumstance lies at the origin of our study of good models:

\begin{Definition}\label{Def-goodFMod}
Let $\ucM$ be a model of an $F$-module $\uM_L$ over $A_L$, resp.\ over $A_{o_L,\pi}[\piinv]$. Then $\ucM$ is called a \emph{good} model if $\ucM/\pi\ucM$ is an $F$-module over $A_\ell$, i.e.\ if the induced $A_\ell$-linear map $$\bar{\sig}^*(\cM/\pi\cM)=(\cM/\pi\cM)\otimes_{A_\ell,\bar{\sig}}A_\ell\pfeil\cM/\pi\cM$$ is injective. 
\end{Definition}

If $\uM_L$ is an (analytic) Anderson motive there is an alternative notion of good reduction as follows.

\begin{Definition}\label{Def-good}
Let $\ucM$ be a model of an Anderson $A$-motive $\uM_L$, resp.\ of an analytic Anderson $A(1)$-motive $\uM_L$. Then $\ucM$ is called a \emph{good model in the strong sense} if $\coker(F_\cM)$ is a finite free $o_L$-module and is annihilated by $\fJ^d$, for some $d\geq 0$.
In this case we also say that $\ucM$ has \emph{good reduction over $o_L$}. 
\end{Definition}

\begin{Theorem}\label{ThmGoodModels}
Let $\ucM$ be a model of an Anderson $A$-motive, resp.\ of an analytic Anderson $A(1)$-motive $\uM_L$. Then $\ucM$ is a good model in the weak sense of Definition~\ref{Def-goodFMod} if and only if it is a good model in the strong sense of Definition~\ref{Def-good}.
\end{Theorem}

\begin{proof}
Since $\sig^*\cM$ is locally free over $A_{o_L}$, resp.\ over $A_{o_L,\pi}$, the natural map $\sig^*\cM\pfeil\sig^*M_L$ is injective and hence $F_\cM$ is injective because $F_{M_L}$ is. We obtain a short eqact sequence
\begin{equation}\label{EqSequenceModel}
0\longrightarrow\sig^*\cM\xrightarrow{\;F_\cM\,}\cM\longrightarrow\coker(F_\cM)\longrightarrow0\,.
\end{equation}
Let $\ucM$ be a good model in the strong sense. Tensoring the short exact sequence \eqref{EqSequenceModel} with $\ell$ over $o_L$ and using that $\coker(F_\cM)$ is supposed to be free over $o_L$ shows that the induced $A_\ell$-linear map $\bar{\sig}^*(\cM/\pi\cM)\pfeil\cM/\pi\cM$ remains injective. So $\ucM$ is a good model in the weak sense.

Conversely suppose that $\ucM$ is a good model in the weak sense. This time tensoring \eqref{EqSequenceModel} with $\ell$ over $o_L$ yields 
\[
0\longrightarrow {\rm Tor}_1^{o_L}\bigl(\coker F_\cM,\ell)\longrightarrow\sig^*\cM\otimes_{o_L}\ell\xrightarrow{\;F_\cM\otimes\id_\ell\,}\cM\otimes_{o_L}\ell\longrightarrow\coker(F_\cM)\otimes_{o_L}\ell\longrightarrow0\,.
\]
By assumption $F_\cM\otimes\id_\ell$ is injective, and so $0={\rm Tor}_1^{o_L}\bigl(\coker F_\cM,\ell)=\{x\in\coker(F_\cM)\:\pi x=0\}$ and $\coker(F_\cM)$ is flat over $o_L$ by \cite[Corollary~6.3]{EisCA}. This implies $\coker(F_\cM)\hookrightarrow\coker(F_\cM)\otimes_{o_L}L=\coker(F_{M_L})$. Since $\coker(F_{M_L})$ is annihilated by $\fJ^d$ for some $d$, the same is true for $\coker(F_\cM)$ which therefore is a finitely generated $A_{o_L}/\fJ^d$-module, resp.\ $A_{o_L,\pi}/\fJ^d$-module, and a fortiori a finitely generated $o_L$-module. Being flat, $\coker(F_\cM)$ is a finite free $o_L$-module. Thus $\cM$ is a good model in the strong sense.
\end{proof}

\begin{Remark}\label{RemGardeynGoodModel}
In \cite{GardeynSST}, Gardeyn develops a theory of semi-stable reduction of analytic Anderson $A(1)$-motives $\uM_L$. He shows that after replacing $L$ by a finite separable extension, $\uM_L$ has a model $\ucM$ such that the reduction $F_\cM\otimes\id_\ell$ is not nilpotent \cite[Proposition~3.3]{GardeynSST}. If $\,\,\overline{\!\!\ucM}'\subset\ucM/\pi\ucM$ is the maximal Frobenius $A_\ell$-submodule with injective $F_{\,\,\overline{\!\!\cM}'}$, he further shows that the support of $\coker(F_{\,\,\overline{\!\!\cM}'})$ is a finite set $S\subset \Spec A_\ell$. After removing $S$ from $\fA(1):=\Spm(A_{o_L,\pi}[\piinv])$ one can lift $\,\,\overline{\!\!\ucM}'$ to an $F$-submodule $\ucM'\subset\ucM|_{\fA(1)\smallsetminus S}$ which has good reduction in the weak sense of Definition~\ref{Def-goodFMod}; see \cite[Theorem~4.7]{GardeynSST}. As one sees from the following example, it is false in general that $S$ is the zero locus of $\fJ$ in $\Spec A_\ell$ and so we cannot expect that $\ucM'$ has good reduction in the strong sense of Definition~\ref{Def-good}.

Let $A=\IF[z]$ and $\zeta=c^*(z)\in\idm_L$. Then $\fJ=(z-\zeta)$. Let $\cM=o_L\langle z\rangle^{\oplus2}$ and $F_\cM={\,0 \enspace \pi(z-\zeta) \choose \,\pi \quad \,z-1\;\;}$. Then $\ucM=(\cM,F_\cM)$ is a model of the analytic Anderson $A(1)$-motive $\uM_L:=\ucM\otimes_{o_L}L$. The reduction $\ucM/\pi\ucM=\bigl(\ell[z]^{\oplus2},{\,0 \enspace\, 0\enspace\, \choose \,0 \;z-1}\bigr)$ contains the maximal Frobenius $A_\ell$-submodule $\,\,\overline{\!\!\ucM}'=\ell[z]\cdot{0\choose 1}$, whose Frobenius is $F_{\,\,\overline{\!\!\cM}'}=z-1$. So $S=V(z-1)\ne V(z)=V(\fJ)$.
\end{Remark}

\begin{Proposition}\label{ExpAlgGoodRed}
If $\uM_L$ is an Anderson $A$-Motive over $L$ having a (good) model $\ucM$ then its analytification $\uM_L\otimes_{A_L}A_{o_L,\pi}[\piinv]$ is an analytic Anderson $A(1)$-motive having the (good) model $\underline{\widehat{\cM}}:=\ucM\otimes_{A_{o_L}}A_{o_L,\pi}$ and the reduction $\underline{\widehat{\cM}}/\pi\underline{\widehat{\cM}}$ of $\underline{\widehat{\cM}}$ is canonically isomorphic to the reduction $\ucM/\pi \ucM$ of $\ucM$.
\end{Proposition}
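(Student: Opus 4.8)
The plan is to read everything off from the base‐ring identities of Section~\ref{The base rings} together with the cocartesian Frobenius squares of Lemma~\ref{Frob-cocart}; no genuinely new construction is needed. First I would note that, $\ucM$ being a model of $\uM_L$, the object $\widehat{\ucM}=\ucM\otimes_{A_{o_L}}A_{o_L,\pi}$ lies in $\FMod(A_{o_L,\pi})$ by the commutative diagram of base–change functors in Section~\ref{Frobenius}, and that
\[
\widehat{\ucM}\otimes_{A_{o_L,\pi}}A_{o_L,\pi}[\piinv]\;=\;\ucM\otimes_{A_{o_L}}A_{o_L,\pi}[\piinv]\;=\;\bigl(\ucM\otimes_{A_{o_L}}A_L\bigr)\otimes_{A_L}A_{o_L,\pi}[\piinv]\;\cong\;\uM_L\otimes_{A_L}A_{o_L,\pi}[\piinv],
\]
since $A_{o_L}\pfeil A_{o_L,\pi}[\piinv]$ factors both through $A_L$ and through $A_{o_L,\pi}$. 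By Proposition~\ref{ExpAlgTMot} the right–hand side is an analytic Anderson $A(1)$-motive, whence $\widehat{\ucM}$ is a formal model of it in the sense of Definition~\ref{DefModAndMotiv}(b).

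Next I would identify the two reductions modulo $\pi$. As $A_{o_L,\pi}$ is the $\pi$-adic completion of the Noetherian ring $A_{o_L}$, the canonical map $A_\ell=A_{o_L}/\pi A_{o_L}\pfeil A_{o_L,\pi}/\pi A_{o_L,\pi}$ is an isomorphism, and tensoring the locally free module $\cM$ with it gives a canonical $A_\ell$-linear isomorphism $\widehat{\cM}/\pi\widehat{\cM}\cong\cM/\pi\cM$. The one point that needs care is the compatibility of this identification with the Frobenius maps, i.e.\ that it matches $\bar{\sig}^*(\widehat{\cM}/\pi\widehat{\cM})\pfeil\widehat{\cM}/\pi\widehat{\cM}$ with $\bar{\sig}^*(\cM/\pi\cM)\pfeil\cM/\pi\cM$; this is delicate because $\sig$ does not stabilise the ideal $\pi A_{o_L,\pi}$ but only sends it into $\pi^{\r}A_{o_L,\pi}$. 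Here Lemma~\ref{Frob-cocart} is exactly what is needed: its cocartesian squares give $\sig^*\widehat{\cM}=\widehat{\cM}\otimes_{A_{o_L,\pi},\sig}A_{o_L,\pi}$ and $\sig^*\cM=\cM\otimes_{A_{o_L},\sig}A_{o_L}$, and $\sig$ reduces modulo $\pi$ to $\bar{\sig}$ on $A_\ell$, so tensoring down to $A_\ell$ yields $\sig^*\widehat{\cM}\otimes_{A_{o_L,\pi}}A_\ell\cong\bar{\sig}^*(\widehat{\cM}/\pi\widehat{\cM})$ compatibly with the analogous identity for $\cM$. Since by our convention $F_{\widehat{\cM}}=F_{\cM}\otimes_{A_{o_L}}A_{o_L,\pi}$, its reduction mod $\pi$ is carried onto the reduction of $F_{\cM}$, so $\widehat{\ucM}/\pi\widehat{\ucM}\cong\ucM/\pi\ucM$ canonically, Frobenius maps included.

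Finally I would conclude. By hypothesis $\ucM$ is a good model of $\uM_L$, so by Definition~\ref{Def-goodFMod} the map $\bar{\sig}^*(\cM/\pi\cM)\pfeil\cM/\pi\cM$ is injective; transporting along the isomorphism just established shows the corresponding map for $\widehat{\cM}$ is injective, i.e.\ $\widehat{\ucM}/\pi\widehat{\ucM}\in\FMod(A_\ell)$ and $\widehat{\ucM}$ is a good model. (If $\ucM$ is moreover a good model in the strong sense, the same base change applied to $\coker(F_{\cM})$—which, being finite free over $o_L$, is already $\pi$-adically complete—gives $\coker(F_{\widehat{\cM}})\cong\coker(F_{\cM})$ finite free over $o_L$ and killed by $\fJ^{d}$, so $\widehat{\ucM}$ has good reduction in the strong sense too.) I expect the compatibility of the $\sig$-pullback with reduction modulo $\pi$—precisely the reason Lemma~\ref{Frob-cocart} was isolated beforehand—to be the only step that is not purely formal.
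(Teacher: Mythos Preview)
Your argument is correct and is precisely the reasoning the authors have in mind; in the paper the proposition is stated with a bare \qed\ and no proof, the verification being regarded as immediate from flatness of $A_{o_L}\pfeil A_{o_L,\pi}$ together with $A_{o_L}/\pi A_{o_L}\cong A_{o_L,\pi}/\pi A_{o_L,\pi}$ (and indeed essentially this computation reappears at the start of the proof of Theorem~\ref{AlgGoodMod}). Your explicit appeal to Lemma~\ref{Frob-cocart} for the Frobenius compatibility of the mod-$\pi$ identification is the right justification for the one non-formal point.
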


\begin{proof}
The statement without the properties of being a good model is obvious. From the isomorphism $\underline{\widehat{\cM}}/\pi\underline{\widehat{\cM}}\isoto\ucM/\pi \ucM$ it follows that $\ucM$ is a good model in the sense of Definition~\ref{Def-goodFMod} if and only if $\underline{\widehat\cM}$ is a good model in the sense of Definition~\ref{Def-goodFMod}.
\end{proof}

Let us also mention the following result of Gardeyn on good reduction of Drinfeld $A$-modules.

\begin{Proposition}\label{PropGoodRedDriMod}
Let $\phi\:A\pfeil L[\tau]$ be a Drinfeld $A$-module over $L$; see \cite{DrinfeldEM} or \cite{Matzat}. Let $\uM=\uM(\phi)$ be the associated Anderson $A$-motive; see \cite[\S4.1]{Anderson} or \cite[\S8.1]{Gardeyn2}. Then the following are equivalent:
\begin{enumerate}
\item \label{PropGoodRedDriMod_A}
$\phi$ has good reduction over $o_L$, i.e.\ $\phi$ is isomorphic over $L$ to a Drinfeld $A$-module $\psi\:A\pfeil L[\tau]$ satisfying $\psi(A)\subset o_L[\tau]$ such that the reduction $\overline\psi\:A\pfeil o_L[\tau]\twoheadrightarrow\ell[\tau]$ is a Drinfeld $A$-module over $\ell$ of the same rank as $\psi$ and $\phi$.
\item \label{PropGoodRedDriMod_B}
$\uM$ has good reduction over $o_L$ in the weak and strong senses of Definitions~\ref{Def-good} and \ref{Def-goodFMod}.
\end{enumerate}
\end{Proposition}

\begin{proof}
Gardeyn~\cite[Theorem~8.1]{Gardeyn2} proved that $\phi$ has good reduction over $o_L$ if and only if $\uM$ has a good model in the weak sense. So the proposition follows from Theorem~\ref{ThmGoodModels}.
\end{proof}

\section{Local shtukas and analytic Anderson motives}\label{LocSht}

Anderson $A$-motives can be viewed as function-field analogs of abelian varieties. Barsotti-Tate groups, which can be associated with abelian varieties over $\IZ_p$-schemes, have effective local shtukas as function-field analogs.

\begin{Definition}\label{Def2.10}
An (\emph{effective}) \emph{local shtuka at $\eps$ over $o_L$} is an object $\uMhut=(\Mhut,F_{\Mhut})\in\FMod(A_{o_L,\epspi})$ such that $\coker(F_{\Mhut})$ is a finite free $o_L$-module and is annihilated by a power of $\fJ$. 
\end{Definition}

\begin{Remark}
If the residue field $\IF_\eps=A/\eps$ of $\eps$ is larger than $\IF$, i.e.\ if the degree $d_\eps:=[\IF_\eps:\IF]>1$, the ring $A_{o_L,\epspi}$ is not an integral domain but a product $\displaystyle A_{o_L,\epspi}=\prod_{i\in\IZ/d_\eps\IZ}A_{o_L,\epspi}/\fa_i$ of integral domains. To describe this product decomposition, note that $A_{o_L,\epspi}=\pl_n\, A_{o_L}/\eps^n=\pl_n\,(A/\eps^n)\otimes_\IF o_L = A_\eps\widehat\otimes_\IF o_L$. By Cohen's structure theorem $A_\eps\cong\IF_\eps\lb z_\eps\rb$ for a uniformizer $z_\eps$ of $A$ at $\eps$. Then $\fa_i=(\alpha\otimes1-1\otimes c^*(\alpha)^{\r^i}\:\alpha\in\IF_\eps\subset A_\eps)$, where we use that $c^*\:A\pfeil o_L$ factors through $c^*\:A_\eps\pfeil o_L$. The factors $A_{o_L,\epspi}/\fa_i$ are isomorphic to $o_L\lb z_\eps\rb$ and hence are integral domains. They are cyclically permuted by $\sig$ because $\sig(\fa_i)=\fa_{i+1}$. By \cite[Proposition~8.8]{BoHa} the functor $(\Mhut,F_{\Mhut})\gehtauf (\Mhut/\fa_0\Mhut, (F_{\Mhut})^{d_\eps})$ is an equivalence between the category of effective local shtukas at $\eps$ over $o_L$ as in Definition~\ref{Def2.10} and the category of pairs $(\hat M_0,\widetilde F_{\hat M})$ where $\hat M_0$ is a free module of finite rank over $A_{o_L,\epspi}/\fa_0$ and $\widetilde F_{\hat M}\:(\sig^{d_\eps})^*\hat M_0\pfeil\hat M_0$ is injective with $\coker(\widetilde F_{\hat M})$ being a finite free $o_L$-module. In \cite{HDict,HHabil} these pairs $(\Mhut_0,\widetilde F_{\Mhut})$ are called (\emph{effective}) \emph{local shtukas}.
\end{Remark}

The following criterion for good reduction of analytic Anderson $A(1)$-motives can be regarded as a \emph{good-reduction Local-Global Principle at the characteristic place}.

\begin{Theorem}\label{GoodRedCrit}
Let $\uM_L=(M_L,F_{M_L})$ be an analytic Anderson $A(1)$-motive over $L$ such that $\coker(F_{M_L})$ is annihilated by $\fJ^d$ for some $d$. Then the following assertions are equivalent:
\begin{enumerate}
\item $\uM_L$ admits a good model in the strong sense of Definition~\ref{Def-good}.
\item There is an effective local shtuka $\uMhut=(\Mhut, F_{\Mhut})$ at $\eps$ over $o_L$ such that $\coker(F_{\Mhut})$ is annihilated by $\fJ^d$, and an isomorphism $\uM_L\otimes_{A_{o_L,\pi}[\piinv]}A_{o_L,\epspi}[\piinv]\cong \uMhut\otimes_{A_{o_L,\epspi}}A_{o_L,\epspi}[\piinv]$ in $\FMod(A_{o_L,\epspi}[\piinv])$.
\end{enumerate}
\end{Theorem}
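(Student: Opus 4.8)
The two implications are proved separately; only the second is substantial.

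\medskip\noindent\textbf{(i)$\,\Rightarrow\,$(ii).}
Given a good model $\ucM\in\FMod(A_{o_L,\pi})$ in the strong sense, with $\coker(F_\cM)$ finite free over $o_L$ and killed by $\fJ^d$, I would put $\uMhut:=\ucM\otimes_{A_{o_L,\pi}}A_{o_L,\epspi}$. Since $A_{o_L,\pi}\pfeil A_{o_L,\epspi}$ is flat, $\coker(F_{\Mhut})=\coker(F_\cM)\otimes_{A_{o_L,\pi}}A_{o_L,\epspi}$. Now $N:=\coker(F_\cM)$ is killed by $\fJ^d$ and $t\otimes1-1\otimes c^*(t)\in\fJ$ with $c^*(t)\in\idm_L$, so expanding $(t-c^*(t))^d$ one gets $t^dN\subset\idm_L N=\pi N$; consequently $\eps^{md}N=t^dN\subset\pi N$, so the $\epspi$-adic and the $\pi$-adic topologies on $N$ coincide. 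As $N$ is finite free over the $\pi$-adically complete ring $o_L$ it is therefore $\epspi$-adically complete, whence $N\otimes_{A_{o_L,\pi}}A_{o_L,\epspi}$, being the $\epspi$-adic completion of the finite $A_{o_L,\pi}$-module $N$, equals $N$. Thus $\uMhut$ is an effective local shtuka at $\eps$ over $o_L$ with $\coker(F_{\Mhut})\cong N$ finite free over $o_L$ and killed by $\fJ^d$, and base-changing $\ucM\otimes_{A_{o_L,\pi}}A_{o_L,\pi}[\piinv]\cong\uM_L$ along $A_{o_L,\pi}[\piinv]\pfeil A_{o_L,\epspi}[\piinv]$ yields the asserted isomorphism.

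\medskip\noindent\textbf{(ii)$\,\Rightarrow\,$(i).}
The plan is a patching argument on the formal scheme $\fX=\Spf(A_{o_L,\pi})$, whose underlying space is $\Spec(A_\ell)$ and which carries the closed point $\eps$; set $\mathfrak{U}:=\fX\setminus\{\eps\}$. Since $\coker(F_{M_L})$ is a finite-dimensional $L$-vector space killed by $\fJ^d$, it is supported at the single $L$-point $V(\fJ)\cap\fA(1)$, which lies in the open residue disc $D^\circ_\eps\subset\fA(1)$ of $\eps$ because $c$ reduces to $\eps$ modulo $\pi$. Hence $F_{M_L}$ is an isomorphism over $\fA(1)\setminus D^\circ_\eps$, the rigid generic fibre of $\mathfrak{U}$. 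First I would invoke the theory of étale $\tau$-sheaves — here the hypothesis that $\ell$ is finite over $\ell^p$, equivalently that $\sigquer$ is finite flat on $A_\ell$ (Section~\ref{Frobenius}), is essential — to obtain a canonical formal model $\ucM_\mathfrak{U}$ of $\uM_L|_{\mathfrak{U}_\rig}$ over $\mathfrak{U}$ for which $F_{\cM_\mathfrak{U}}$ is an isomorphism (the unit-root model); cf.\ \cite{BH}, \cite{Gardeyn2}, \cite{HDict}. On the formal completion $\fX^\wedge_\eps=\Spf(A_{o_L,\epspi})$ I would take $\uMhut$ itself.

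It remains to glue these along the formal punctured disc $\mathfrak{U}\cap\fX^\wedge_\eps$, which is $\Spf$ of the $\pi$-adic completion of $A_{o_L,\epspi}[1/z_\eps]$ for a uniformizer $z_\eps\in A$ at $\eps$. Over this ring $\coker(F_{\Mhut})$ vanishes: it is killed by $\fJ^d$, $\fJ$ reduces modulo $\pi$ to a power of $(z_\eps)$, $z_\eps$ becomes a unit there, and $\pi$-adic completeness upgrades ``unit modulo $\pi$'' to ``unit''. Thus $\ucM_\mathfrak{U}$ and $\uMhut$ both restrict to étale $\tau$-sheaves over $\mathfrak{U}\cap\fX^\wedge_\eps$, and after inverting $\pi$ both are identified with the base change of $\uM_L$ to $\cO(\mathfrak{U}\cap\fX^\wedge_\eps)[\piinv]$ — for $\ucM_\mathfrak{U}$ because it models $\uM_L|_{\mathfrak{U}_\rig}$, for $\uMhut$ via the given isomorphism — hence with each other. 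Since an étale $\tau$-sheaf is determined by its generic fibre (again using $\sigquer$ finite flat), this identification lifts canonically to an isomorphism over $\mathfrak{U}\cap\fX^\wedge_\eps$ itself, furnishing the gluing datum. By a formal-gluing principle for coherent sheaves on $\fX$ relative to the open $\mathfrak{U}$ and the completion $\fX^\wedge_\eps$ (a Beauville--Laszlo type statement at the point $\eps$), the triple $(\ucM_\mathfrak{U},\uMhut,\text{gluing datum})$ produces an object $\ucM\in\FMod(A_{o_L,\pi})$. Gluing the rigid generic fibres, via the given isomorphism over $D^\circ_\eps$, gives $\ucM\otimes_{A_{o_L,\pi}}A_{o_L,\pi}[\piinv]\cong\uM_L$; and $\coker(F_\cM)$, which vanishes over $\mathfrak{U}$ and equals $\coker(F_{\Mhut})$ over $\fX^\wedge_\eps$, is a coherent sheaf on $\fX$ supported at the closed point $\eps$ — being killed by $\fJ^d$ — hence recovered from its completion there, so it is finite free over $o_L$ and killed by $\fJ^d$. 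Therefore $\ucM$ is a good model of $\uM_L$ in the strong sense.

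\medskip\noindent\textbf{Main obstacle.}
The crux is the patching step: constructing the canonical unit-root formal model $\ucM_\mathfrak{U}$ away from $\eps$, together with the companion fact that étale $\tau$-sheaves are determined by their generic fibres — both resting on $\ell$ being finite over $\ell^p$ through the finite flatness of $\sigquer$ — and then verifying that the formal gluing at the point $\eps$ indeed applies in this mixed rigid/formal setting. The remaining implication, the $\fJ^d$-torsion bookkeeping, and the final verification are routine.
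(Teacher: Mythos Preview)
Your (i)$\Rightarrow$(ii) matches the paper's argument.

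For (ii)$\Rightarrow$(i) your route is genuinely different. The paper does not glue: it simply sets $\cM := M_L \cap f^{-1}(\Mhut)$ inside $M_L \otimes A_{o_L,\epspi}[\piinv]$ (viewing $M_L$ and $\Mhut$ via the natural inclusions $i,j$) and then verifies by hand, in seven elementary steps, that this $\cM$ is a good model. Concretely it shows that $\cM$ is finitely generated over $A_{o_L,\pi}$ by squeezing it into a finite $o_L\la z\ra$-module via the embedding $\IF[z]\to A$ of Remark~\ref{t}; that $\cM/\pi\cM$ is $\ell[z]$-torsion-free and hence $\cM$ is locally free by Bourbaki's flatness criterion; that $\fJ^d\coker(F_\cM)=0$ by a direct chase using $\fJ^d\Mhut\subset\im(F_{\Mhut})$; that the reduced Frobenius $\oF$ is injective by a rank count; and finally that $\coker(F_\cM)$ is $o_L$-free. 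No unit-root model, no Beauville--Laszlo, no external input on \'etale $\tau$-sheaves.

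Your geometric plan is conceptually natural and could likely be pushed through, but the obstacles you flag are real: the canonical \'etale formal model over $\mathfrak{U}$ is not available off the shelf in \cite{BH,Gardeyn2,HDict} in the form you need, and the formal Beauville--Laszlo at the closed point of $\Spf(A_{o_L,\pi})$ would have to be stated and checked in this setting. The paper's intersection formula buys complete self-containment at the price of a longer hands-on verification; your gluing would buy a cleaner structural explanation of \emph{why} the model exists, at the price of importing those two ingredients.
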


\begin{proof}
1. In order to show that (ii) implies (i), let $f\: M_L\otimes\forget{_{A_{o_L,\pi}[\piinv]}}A_{o_L,\epspi}[\piinv]\isoto\Mhut\otimes\forget{_{A_{o_L,\epspi}}}A_{o_L,\epspi}[\piinv]=:\Mhut[\piinv]$ be an $F$-equivariant isomorphism of $A_{o_L,\epspi}[\piinv]$-modules as in (ii). We have canonical $F$-equivariant $A_{o_L,\pi}$-linear maps 
$$i\: M_L \pfeil M_L\otimes_{A_{o_L,\pi}[\piinv]}A_{o_L,\epspi}[\piinv],\qquad j\: \Mhut \pfeil \Mhut[\piinv]$$
where $i$ (resp., $j$) is injective since $M_L$ (resp., $\Mhut$) is flat. Consider the $A_{o_L,\pi}$-module $\cM=\im(i)\cap f^\inv(\im(j)).$
We will show that $\cM$ is a good model of $\uM_L$. 
The inclusion $\cM\haken M_L$ gives rise to an $A_{o_L,\pi}[\piinv]$-linear embedding $\cM[\piinv]\haken M_L[\piinv]\cong  M_L$, which is in fact an isomorphism, because if $m\in M_L$ there is an $s\geq0$ such that $\pi^s f(m\otimes 1)\in \im(j)$, i.e.\ $\pi^sm\in\cM$.

\smallskip\noindent
2. In order to show that $\cM$ is a finitely generated $A_{o_L,\pi}$-module we use the embedding $\iota\:\IF[z]\pfeil A$ from Remark~\ref{t} and the induced maps $L\langle z\rangle\pfeil A_{o_L,\pi}[\piinv]$ and $o_L\lb z\rb\pfeil A_{o_L,\epspi}$ from \eqref{CProj-Abb}. 
Let $(e_1,...,e_m)$ be a basis of $M_L$ over the principal ideal domain $L\la z\ra$. Furthermore, let $(d_1,...,d_n)$ be a basis for $\Mhut$ over the local ring $o_L\lb z\rb$. Note that the basis $(e_1,...,e_m)$ gives rise to an isomorphism $M_L\otimes_{L\la z\ra}o_L\lb z\rb[\piinv]\cong o_L\lb z\rb[\piinv]^{\oplus m}$. For every $\nu=1,...,n$ we consider $ f^\inv(d_\nu)$ and regard it as an element of the right-hand side of this isomorphism. We choose $N\geq 0$ big enough, such that $ f^\inv(\pi^N d_\nu)\in o_L\lb z\rb^{\oplus m}$ for all $\nu$, say 
$$f^\inv(\pi^N d_\nu)\;=\; (\rho_{\nu,1},...,\rho_{\nu,m})$$
where $\rho_{\nu,\mu}\in o_L\lb z\rb$. Now let $x\in\cM$. Via $ f$ we obtain $ f(x)=\sum_\nu\lambda_\nu d_\nu$ in $\Mhut$, with suitable $\lambda_\nu\in o_L\lb z\rb$. Consequently $ f(\pi^N x)=\sum_\nu \lambda_\nu(\pi^Nd_\nu)$, so that the image of $\pi^Nx$ in $o_L\lb z\rb^{\oplus m}$ satisfies $\pi^Nx=\sum_\mu(\sum_\nu\lambda_\nu\rho_{\nu,\mu})e_\mu$. The appearing scalars $h_\mu=\sum_\nu\lambda_\nu\rho_{\nu,\mu}$ have, in fact, to be elements of $L\la z\ra\cap o_L\lb z\rb=o_L\la z\ra$. Inside $M_L$ we may write $x=\pi^{-N}\pi^Nx=\sum_\mu h_\mu\pi^{-N}e_\mu$, so that we may conclude 
$$\cM\subset\sum_\mu o_L\la z\ra \pi^{-N}e_\mu.$$
Being a submodule of a finitely generated module over a noetherian ring, $\cM$ has to be a finitely generated $o_L\la z\ra$-module and hence a finitely generated $A_{o_L,\pi}$-module.

\smallskip\noindent
3. We claim that $\cM/\pi\cM$ is torsion-free and hence free over $\ell[z]$, because it is finitely generated. 
Let $x\in\cM$, and let $\lambda\in o_L\la z\ra$ be such that $\lambda\notin\pi o_L\la z\ra$ and $\lambda x\in \pi\cM$, say $\lambda x=\pi y$ for some $ y\in\cM$. In order to prove that $\cM/\pi \cM$ is torsion-free we must show that $x\in \pi\cM$. First suppose that $\lambda\in o_L\la z\ra\cap o_L\lb z\rb^\times$. We consider $\pi^\inv x\in M_L$. In fact, this element lies in $\cM$, since we have $ f(\pi^\inv x)=\lambda^\inv f( y)\in\Mhut$. Consequently $x=\pi(\pi^\inv x)\in\pi\cM$. 

Let us next assume that $\lambda=z^n$ and show that $z^nx\in\pi\cM$ implies $x\in\pi\cM$ for any $n\geq 0$. By induction, it suffices to consider the case $n=1$. So suppose $zx\in\pi\cM$, say $zx=\pi y$. Let $ f(x)=\sum_\nu \beta_\nu d_\nu$, where $(d_1,...,d_n)$ is the finite $o_L\lb z\rb$-basis of $\Mhut$ fixed before. The relation $zx=\pi y$ implies that $\pi \mid z\beta_\nu$ for every index $\nu$, so that $\pi\mid\beta_\nu$ for every $\nu$. Therefore $\pi^\inv x\in M_L$ necessarily maps via $ f$ to an element of $\Mhut$, i.e.\ $x\in \pi\cM$. 

Finally we treat the case for general $\lambda=\sum_s\lambda_s z^s$ and suppose that $\lambda\notin o_L\lb z\rb^\times$, that is $\pi\mid \lambda_0$. This means we find $\lambda'\in o_L[z]$ and $\lambda''\in o_L\la z\ra\cap o_L\lb z\rb^\times$ such that $\lambda=\pi\lambda'+z^N\lambda''$ for some $N\geq 1$. We have $\pi y=\lambda x=\pi\lambda'x+z^N\lambda''x$. In particular $z^N\lambda''x=\pi(y-\lambda'x)\in\pi\cM$ and by the above $\lambda''x\in\pi\cM$ and $x\in\pi\cM$.

Thus we have proved that $\cM/\pi\cM$ is free over $\ell[z]$. It follows that $\cM/\pi\cM$ is locally free of finite rank over $A_\ell$.

\smallskip\noindent
4. We claim that $\cM$ is locally free of finite rank over $A_{o_L,\pi}$. Since it is finitely generated it only remains to show that $\cM$ is flat over $A_{o_L,\pi}$. 
Since $A_{o_L,\pi}$ is $\pi$-adically complete and separated, $\pi A_{o_L,\pi}$ is contained in the Jacobson radical $\jac(A_{o_L,\pi})$ by \cite[Theorem~8.2]{Matsu}, and the $A_{o_L,\pi}$-module $\cM$ is finitely generated, so that $\cM$ is $\pi$-adically \emph{ideally Hausdorff} in the sense of \cite[III.5.1]{BourbakiCA}. In the preceding step we have shown that $\cM/\pi\cM$ is flat over $A_\ell\cong  A_{o_L,\pi}/\pi A_{o_L,\pi}$, and we know that $\cM$ has no $\pi$-torsion, so that the canonical map $\pi A_{o_L,\pi}\otimes_{A_{o_L,\pi}}\cM\pfeil \pi\cM$ is an isomorphism. Therefore, by Bourbaki's Flatness Criterion \cite[\S\,III.5.2, Th\'eor\`eme~1(iii)]{BourbakiCA}, we may conclude that $\cM$ is indeed flat over $A_{o_L,\pi}$. 

\smallskip\noindent
5. We note that $\sig^*\cM=\sig^*\im(i)\cap (\sig^*f)^\inv(\sig^*\im(j))$ because the functor $\sig^*$ is exact by Lemma~\ref{Frob-cocart}. By the $F$-equivariance of $f$ we obtain a Frobenius $F_\cM\:\sig^*\cM\pfeil\cM$. It is injective because $F_{M_L}$ is. We set $\ucM:=(\cM,F_\cM)$.

\smallskip\noindent
6. Next we claim that $\fJ^d\coker(F_\cM)=0$. Let $x=\sum_\nu h_\nu m_\nu\in \fJ^d\cM$ where $h_\nu\in\fJ^d$ and $m_\nu\in\cM$. Since $\coker(F_{M_L})$ is annihilated by $\fJ^d$, there is a (unique) $y\in \sig^*M_L$ such that $x=\sum_\nu h_\nu m_\nu=F_{M_L}(y)$. We have to show that $y\in\sig^*\cM=\sig^*\im(i)\cap (\sig^*f)^\inv(\sig^*\im(j))$. So it remains to see that $(\sig^*f)(y)\in\im(\sig^*j)$. Indeed, inside $\Mhut[\piinv]$ we have $f(x)=f(F_{M_L}(y))=F_{\Mhut}((\sig^*f)(y))$. On the other hand, the linearity of $f$ and $j$ gives that $f(x)=\sum_\nu h_\nu f(m_\nu\otimes 1)=j(y')$ for some $y'\in \fJ^d \Mhut\subset \im(F_{\Mhut})$, say $y'=F_{\Mhut}(y'')$ for a $y''\in \sig^*\Mhut$. Thus $f(x)=F_{\Mhut}((\sig^*j)(y''))$. So finally, since $F_{\Mhut}\:\sig^*\Mhut[\piinv]\pfeil \Mhut[\piinv]$ is injective, we obtain that $(\sig^*f)(y)=(\sig^*j)(y'')$, as desired.

\smallskip\noindent
7. Finally we show that the kernel $V$ of $\oF\:\sig^*(\cM/\pi\cM)\pfeil\cM/\pi\cM$ is trivial. This implies that $\ucM$ is a good model of $\uM_L$ in the weak sense of Definition~\ref{Def-goodFMod}, which is enough by Theorem~\ref{ThmGoodModels}. 

We have already shown that $\fJ^d\cM\subset\im(F_\cM)$. Since $(z-\zeta)\in\fJ$ for $\zeta:=c^*(z)\in o_L$ we have a chain of $o_L\la z\ra$-modules $(z-\zeta)^d\cM\subset\im(F_\cM)\subset\cM$. The element $\zeta\in o_L$ is zero mod $\pi$, and we obtain 
\begin{equation}\label{EqImoF}
z^d(\cM/\pi\cM)\subset\im(\oF)\subset \cM/\pi\cM.
\end{equation}
We know that $\cM/\pi\cM$ is finite free over $\ell[z]$. Therefore the middle term $W:=\im(\oF)$ in the latter chain has full rank inside $\cM/\pi\cM$. Finally, taking ranks in the (split) short exact sequence of finite free $\ell[z]$-modules
$$0\pfeil V\pfeil \sig^*(\cM/\pi\cM) \xrightarrow{\;\oF\,} W\pfeil 0$$
accomplishes the proof that $V$ indeed is trivial.
\forget{

\smallskip\noindent
8. It remains to prove that the co\-kernel $C$ of $F_\cM\:\sig^*\cM\pfeil\cM$ is a finite free $o_L$-module. 
In a first step we show that $C$ is finitely generated over $o_L$. In equation~\eqref{EqImoF} we saw that $C/\pi C$ is a quotient of $(\cM/\pi\cM)\big/z^d(\cM/\pi\cM)$ and hence a finite dimensional $\ell$-vector space. Since $\pi\in\jac(A_{o_L,\pi})$ and since $C$, being a quotient of $\cM$, is finitely generated over $A_{o_L,\pi}$, we conclude by Krull's Theorem \cite[Theorem~8.10]{Matsu} that $C$ is $\pi$-adically separated. By \cite[Theorem~8.4]{Matsu} it follows that $C$ is finitely generated.

In a second step we show that $C$ is a flat $o_L$-module, which will imply that $C$ is finite free over the local ring $o_L$. Since we have just seen that $C/\pi C$ is free and hence flat over $\ell$, we only need to prove that $C$ has trivial $\pi$-torsion. Then Bourbaki's Flatness Criterion \cite[\S\,III.5.2, Th\'eor\`eme~1(iii)]{BourbakiCA}, will yield the desired result.
So let $x\in\cM$ with $\pi x=F_\cM(y)\in \im(F_\cM)$ for an element $y\in \sig^*\cM$. 
Denoting residues modulo $\pi\cM$ by a bar, we see that $0=\overline{\pi x}=\oF(\bar y)$. By the injectivity of $\oF$ we must have $y=\pi y'$ for a $y'\in\sig^*\cM$ and $x=F_\cM(y')\in\im(F_\cM)$. Thus $C$ is finite free over $o_L$ and we have shown that $\ucM$ is a good model for $\uM_L$.
}

\bigskip\noindent
8. Conversely, in order to show that (i) implies (ii), suppose that $(\ucM,\alpha)$ is a good model of $\uM_L$. We define 
$$\uMhut=\ucM\otimes_{A_{o_L,\pi}}A_{o_L,\epspi},$$ 
i.e.\ $\uMhut$ equals the completion of $\ucM$ for the $\epspi A_{o_L,\pi}$-adic topology. It is clear that the $F$-equivariant isomorphism $\alpha\:M_L\isoto\cM[\piinv]$ of $A_{o_L,\pi}[\piinv]$-modules gives rise to a natural $F$-equivariant $A_{o_L,\epspi}[\piinv]$-linear isomorphism
$M_L\otimes_{A_{o_L,\pi}[\piinv]}A_{o_L,\epspi}[\piinv]\cong  \Mhut[\piinv].$

We claim that $\uMhut$ is a local shtuka. Indeed, by base change, $\Mhut$ is again locally free of finite rank. Furthermore, since the completion map $A_{o_L,\pi}\pfeil A_{o_L,\epspi}$ is Frobenius-equivariant and flat, we obtain an injective map $\Mhut\otimes_{(A_{o_L,\epspi}),\sig}A_{o_L,\epspi}\pfeil \Mhut.$
Let $C'$ be its cokernel, and let $C=\coker(F_\cM)$, i.e.\ $C'\cong  C\otimes_{A_{o_L,\pi}}A_{o_L,\epspi}.$
Since $C$ is annihilated by $\fJ^d$ the module $C'$ equals $C$ and it is finite free over $o_L$. Thus $\uMhut$ is an effective local shtuka over $o_L$.
\end{proof}

\begin{Remark}\label{RemNotEquiv}
Steps 1-4 in the previous proof suggest that there is an equivalence of categories
\begin{eqnarray*}
\cF\:\left\{\begin{array}{l} \text{finite locally free} \\ \text{$A_{o_L,\pi}$-modules $\cM$}\end{array}\right\} & \stackrel{\sim}{\longleftrightarrow} &
\left\{\begin{array}{l}
\text{triples }(M_L,\Mhut,f)\text{ consisting of}\\
\text{\textbullet\ a finite locally free }A_{o_L,\pi}[\piinv]\text{-module }M_L,\\
\text{\textbullet\ a finite locally free }A_{o_L,\epspi}\text{-module }\Mhut,\text{ and}\\
\text{\textbullet\ an isomorphism of }A_{o_L,\epspi}[\piinv]\text{-modules}\\
\quad f\: M_L\otimes_{A_{o_L,\pi}[\piinv]}A_{o_L,\epspi}[\piinv]\isoto \Mhut\otimes_{A_{o_L,\epspi}}A_{o_L,\epspi}[\piinv]\end{array}\right\} \\[2mm]
\cM & \longmapsto & \bigl(\cM\otimes_{A_{o_L,\pi}}A_{o_L,\pi}[\piinv],\,\cM\otimes_{A_{o_L,\pi}}A_{o_L,\epspi},\,\id_{\cM\otimes A_{o_L,\epspi}[\piinv]}\bigr)\,,
\end{eqnarray*}
where on the right a morphism $\underline h=(h_L,\hat h)\:(M_L,\Mhut,f)\pfeil(M_L',\Mhut',f')$ consists of a morphism $h_L\:M_L\to M_L'$ and a morphism $\hat h\:\Mhut\pfeil\Mhut'$ such that $f'\circ (h_L\otimes\id_{A_{o_L,\epspi}[\piinv]})=(\hat h\otimes\id_{A_{o_L,\epspi}[\piinv]})\circ f$.

However, \emph{this is false} as can be seen from the following example, where we take $A=\IF[z]$. We choose an element $a\in\ell\lb z\rb\subset\ell\lbc z\rbc$ such that $a\notin \ell(z)$, and we let $\Delta= \left(\begin{smallmatrix} 1 & \,\pi^{-1}a \\ 0 & \,\pi^{-1} \end{smallmatrix}\right)$. Set $M_L=L\langle z\rangle^{\oplus2},\,\Mhut=\Delta\cdot o_L\lb z\rb^{\oplus2}$ and $f=\id_{o_L\lb z\rb[\piinv]^2}$. Then $\Delta^{-1}= \left(\begin{smallmatrix} 1 & -a \\ 0 & \pi \end{smallmatrix}\right)\in o_L\lb z\rb^{2\times2}$ and
\[
o_L\lb z\rb^{\oplus2}\;=\;\Delta\cdot\Delta^{-1}o_L\lb z\rb^{\oplus2}\;\subset\; \Mhut \;\subset\; \pi^{-1}o_L\lb z\rb^{\oplus2}\,.
\]
If there was a finite free $A_{o_L,\pi}$-module $\cM$ with $(h_L,\hat h)\:\cF(\cM)\isoto(M_L,\Mhut,f)$, then it had to satisfy $\cM\cong M_L\cap\Mhut$ with $h_L$ and $\hat h$ induced from the inclusions $M_L\cap\Mhut\subset M_L$ and $M_L\cap\Mhut\subset\Mhut$. So we may take directly $\cM:= M_L\cap\Mhut$. It satisfies $o_L\langle z\rangle^{\oplus2}\subset\cM\subset \pi^{-1}o_L\langle z\rangle^{\oplus2}$. We claim that, in fact, the first inclusion is an equality. Namely let $\left(\begin{smallmatrix} v \\ w \end{smallmatrix}\right)=\left(\begin{smallmatrix} \pi^{-1}v_0+v' \\ \pi^{-1}w_0+w' \end{smallmatrix}\right)\in\cM$ with $v_0,w_0\in\ell[z]$ and $v',w'\in o_L\langle z\rangle$. Then $\Delta^{-1}\left(\begin{smallmatrix} v \\ w \end{smallmatrix}\right)=\left(\begin{smallmatrix} \pi^{-1}v_0+v'-\pi^{-1}aw_0-aw' \\ w_0+\pi w' \end{smallmatrix}\right)\in o_L\lb z\rb^{\oplus2}$. This implies $v_0=aw_0$ in $\ell\lb z\rb$. If $w_0\ne0$ we get $a=v_0/w_0\in\ell(z)$ in contradiction to our assumption. So $w_0=v_0=0$ and $\left(\begin{smallmatrix} v \\ w \end{smallmatrix}\right)\in o_L\langle z\rangle^{\oplus2}$. This proves our claim that $\cM=o_L\langle z\rangle^{\oplus2}$. We conclude that $\cF(\cM)\not\cong(M_L,\Mhut,f)$ and $\cF$ is not an equivalence of categories.
\end{Remark}

After this example the following result is even more surprising.

\begin{Corollary}\label{GoodModel-LocalShtuka}
Let $\uM_L$ be an analytic Anderson $A(1)$-motive over $L$. Then there is an equivalence of categories
\begin{eqnarray*}
\left\{\begin{array}{l} \text{good models $(\ucM,\alpha)$ of }\uM_L\text{ in the}\\ \text{sense of Definitions~\ref{Def-good} and \ref{Def-goodFMod}}\end{array}\right\} & \stackrel{\sim}{\longleftrightarrow} &
\left\{\begin{array}{l}
\text{pairs }(\uMhut,f)\text{ consisting of}\\
\text{\textbullet\ a local shtuka }\uMhut\text{ at }\eps\text{ over }o_L,\text{ and}\\
\text{\textbullet\ an isomorphism in }\FMod(A_{o_L,\epspi}[\piinv])\\
\quad f\: \uM_L\otimes A_{o_L,\epspi}[\piinv]\isoto \uMhut[\piinv]\end{array}\right\} \\[2mm]
(\ucM,\alpha) & \longmapsto & (\ucM,\alpha)\otimes_{A_{o_L,\pi}}A_{o_L,\epspi}\,,
\end{eqnarray*}
where on the right-hand side a \emph{morphism} of pairs $\hat\beta\:(\uMhut,f)\isoto(\uMhut',f')$ is defined to be an isomorphism of local shtukas $\hat\beta\:\uMhut\isoto\uMhut'$ satisfying $f'=\hat\beta\circ f$.
\end{Corollary}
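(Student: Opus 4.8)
The plan is to turn the two constructions used in the proof of Theorem~\ref{GoodRedCrit} into a pair of mutually inverse bijections. Write $\Phi$ for the map of the statement, sending a good model $\ucM$ of $\uM_L$ — equipped with its trivialization $\ucM\otimes_{A_{o_L,\pi}}A_{o_L,\pi}[\piinv]\cong\uM_L$ — to the pair $(\uMhut,f)$, where $\uMhut:=\ucM\otimes_{A_{o_L,\pi}}A_{o_L,\epspi}$ and $f$ is the isomorphism $\uM_L\otimes_{A_{o_L,\pi}[\piinv]}A_{o_L,\epspi}[\piinv]\isoto\uMhut\otimes_{A_{o_L,\epspi}}A_{o_L,\epspi}[\piinv]$ induced by that trivialization. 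Write $\Psi$ for the assignment sending a pair $(\uMhut,f)$ to the good model $\ucM$ whose underlying module is $\cM:=\im(i)\cap f^\inv(\im(j))$, built in the part ``(ii)$\Rightarrow$(i)'' of that proof, with the tautological trivialization $\cM\otimes A_{o_L,\pi}[\piinv]\isoto\uM_L$ established there. By Theorem~\ref{GoodRedCrit} and Remark~\ref{RemGoodModels}, $\Phi$ and $\Psi$ land in the two classes named in the Corollary, and they respect isomorphisms: applying $-\otimes_{A_{o_L,\pi}}A_{o_L,\epspi}$ to an isomorphism of good models yields an isomorphism of pairs, while an isomorphism $(\uMhut_1,f_1)\isoto(\uMhut_2,f_2)$ identifies the submodules $f_1^\inv(\Mhut_1)$ and $f_2^\inv(\Mhut_2)$ of $M_L\otimes A_{o_L,\epspi}[\piinv]$, hence also their intersections with $M_L$. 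It thus remains to show $\Psi\circ\Phi=\id$ and $\Phi\circ\Psi=\id$.

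For $\Psi\circ\Phi=\id$, start from a good model $\ucM$, so $\cM\haken M_L$ and, unwinding the definitions, $\Psi(\Phi(\ucM))$ is the good model with underlying module $M_L\cap\widehat{\cM}$, the intersection formed inside $M_L\otimes_{A_{o_L,\pi}[\piinv]}A_{o_L,\epspi}[\piinv]$, where $\widehat{\cM}=\cM\otimes_{A_{o_L,\pi}}A_{o_L,\epspi}$. I would prove $M_L\cap\widehat{\cM}=\cM$ via the standard principle that the $\epspi$-adic completion of a lattice meets the generic fibre in that lattice: since $\cM$ is finite locally free over $A_{o_L,\pi}$ and $A_{o_L,\pi}\to A_{o_L,\epspi}$ is flat, the map $M_L/\cM\cong\varinjlim_n\cM/\pi^n\cM\to\widehat{\cM}[\piinv]/\widehat{\cM}\cong\varinjlim_n\widehat{\cM}/\pi^n\widehat{\cM}$ is the colimit of the maps $\cM/\pi^n\cM\to\widehat{\cM}/\pi^n\widehat{\cM}\cong(\cM/\pi^n\cM)\otimes_{A_{o_L}/\pi^n}(A_{o_L,\epspi}/\pi^n)$, each of which is injective: as $\cM/\pi^n\cM$ is finite locally free over $A_{o_L}/\pi^n$ this amounts to the injectivity of $A_{o_L}/\pi^n\to A_{o_L,\epspi}/\pi^n$, which by dévissage along the powers of the nonzerodivisor $\pi$ reduces to the case $n=1$, namely the injection $A_\ell=A_{o_L}/\pi A_{o_L}\haken A_{o_L,\epspi}/\pi A_{o_L,\epspi}$ of the Dedekind domain $A_\ell$ into its $\eps$-adic completion, which is injective by Krull's intersection theorem.

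The substance lies in $\Phi\circ\Psi=\id$. Given $(\uMhut,f)$ and $\ucM:=\Psi(\uMhut,f)$ — a good model by Theorem~\ref{GoodRedCrit} — the inclusion $\cM\subset f^\inv(\Mhut)$ and the $A_{o_L,\epspi}[\piinv]$-linearity of $f$ give a natural $F$-equivariant map $h\:\widehat{\cM}=\cM\otimes_{A_{o_L,\pi}}A_{o_L,\epspi}\to\Mhut$; it is injective ($\cM$ finite locally free, $A_{o_L,\epspi}$ flat and $\pi$-torsion-free over $A_{o_L,\pi}$) and equals $f$ after inverting $\pi$. Hence $\Phi(\Psi(\uMhut,f))\cong(\uMhut,f)$ as soon as $h$ is an isomorphism, which I would deduce from the general claim: \emph{a morphism of effective local shtukas at $\eps$ over $o_L$ which is injective on underlying modules and becomes an isomorphism after inverting $\pi$ is an isomorphism.} To prove it, restrict scalars along the finite flat map $o_L\lb z\rb\to A_{o_L,\epspi}$ of~\eqref{CProj-Abb}, attached to the monomorphism $\IF[z]\to A$, $z\gehtauf t$, of Remark~\ref{t}; over the regular local ring $o_L\lb z\rb$ the two local shtukas become finite free of equal rank and $h$ is represented by a square matrix over $o_L\lb z\rb$ with nonzero determinant, so $Q:=\coker(h)$ is a finitely generated $o_L\lb z\rb$-module killed by a power of $\pi$. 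Feeding the two short exact sequences $0\to\sig^*\widehat{\cM}\xrightarrow{F_{\widehat{\cM}}}\widehat{\cM}\to\coker(F_{\widehat{\cM}})\to0$ and $0\to\sig^*\Mhut\xrightarrow{F_{\Mhut}}\Mhut\to\coker(F_{\Mhut})\to0$, together with the vertical map $h$, into the snake lemma — and using that $\sig$ is flat (Lemma~\ref{Frob-cocart}), that the free $o_L$-modules $\coker(F_{\widehat{\cM}})$ and $\coker(F_{\Mhut})$ have the same rank (as $h[\piinv]$ is an isomorphism), so that the induced map $c$ between them is injective (it is an isomorphism after $\otimes_{o_L}L$), and that each of them is annihilated by a power of $z-\zeta$ with $\zeta:=c^*(t)\in\pi o_L$, hence by a unit of the localization $o_L\lb z\rb_{(\pi)}$ — one obtains a short exact sequence $0\to\sig^*Q\to Q\to\coker(c)\to0$ with $\coker(c)_{(\pi)}=0$. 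Localizing at the height-one prime $(\pi)\subset o_L\lb z\rb$, a discrete valuation ring with uniformizer $\pi$ that $\sig$ stabilizes and on which $\sig(\pi)=\pi^r$, gives $\sig^*(Q_{(\pi)})\cong Q_{(\pi)}$; but on finite-length modules over this DVR one has $\operatorname{length}\bigl(\sig^*(-)\bigr)=r\cdot\operatorname{length}(-)$, since its unique simple module $\kappa$ satisfies $\sig^*\kappa\cong o_L\lb z\rb_{(\pi)}/(\pi^r)$, and $r\geq2$; hence $Q_{(\pi)}=0$. As $Q$ is supported on $V(\pi)\subset\Spec o_L\lb z\rb$, which is irreducible with generic point $(\pi)$, this forces $Q=0$, i.e.\ $h$ is an isomorphism.

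The step I expect to be the main obstacle is exactly this last one: getting the snake-lemma bookkeeping right, and above all establishing the length identity $\operatorname{length}(\sig^*S)=r\operatorname{length}(S)$ at the DVR $o_L\lb z\rb_{(\pi)}$ together with the support argument promoting $Q_{(\pi)}=0$ to $Q=0$. Passing to the domain $o_L\lb z\rb$ is what renders both points transparent; arguing over $A_{o_L,\epspi}$ directly would be hampered by the fact that, when $[\IF_\eps:\IF]>1$, that ring is a finite product of domains whose factors are cyclically permuted by $\sig$. Once the claim is proved, $\Phi$ and $\Psi$ are mutually inverse bijections, manifestly compatible with the notions of isomorphism on both sides, which is the assertion of the Corollary.
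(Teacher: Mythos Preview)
Your overall strategy is sound, and your treatment of $\Psi\circ\Phi=\id$ is a careful unpacking of what the paper dispatches in one line (namely $\cM=\cM[\piinv]\cap f^{-1}(\widehat{\cM})$, justified there by $A_{o_L,\pi}=A_{o_L,\pi}[\piinv]\cap A_{o_L,\epspi}$). For the crucial step $\Phi\circ\Psi=\id$ --- showing the comparison map $h\colon\widehat{\cM}\to\Mhut$ is an isomorphism --- you take a genuinely different route. The paper argues with determinants over the factorial local ring $o_L\lb z\rb$: it writes $\det(F_{\widehat{\cM}})$ and $\det(F_{\Mhut})$ as unit multiples of powers $(z-\zeta)^e$ and $(z-\zeta)^{e'}$ of the prime element $z-\zeta$, shows $e=e'$ by comparing the $o_L$-ranks of the two Frobenius cokernels (which agree since $h[\piinv]$ is an isomorphism), and then reads off from the $F$-equivariance relation $\det(h)\det(F_{\widehat{\cM}})=\det(F_{\Mhut})\,\sig(\det(h))$, reduced modulo $z$, that the constant term of $\det(h)$ lies in $o_L^\times$, whence $\det(h)\in o_L\lb z\rb^\times$. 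Your snake-lemma/length argument is more structural and, as you observe, isolates a clean general statement about morphisms of effective local shtukas.

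There is, however, a genuine gap in your last sentence. From $Q_{(\pi)}=0$ together with ``$Q$ is supported on the irreducible set $V(\pi)$ with generic point $(\pi)$'' you conclude $Q=0$; this implication is false. A counterexample is $Q=o_L\lb z\rb/(\pi,z)\cong\ell$: it is killed by $\pi$, hence supported on $V(\pi)$, and $Q_{(\pi)}=0$ since $z\notin(\pi)$ annihilates it, yet $Q\ne0$. Vanishing of the stalk at the generic point of the support only forces the support into a proper closed subset. The repair is short and stays entirely within your framework: once $Q_{(\pi)}=0$, the finitely generated $o_L\lb z\rb$-module $Q$ is supported only at the closed point $(\pi,z)$ and therefore has \emph{finite length} over the Noetherian local ring $o_L\lb z\rb$. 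Now run the same length identity over $o_L\lb z\rb$ itself rather than over its localization: the unique simple module is $\ell=o_L\lb z\rb/(\pi,z)$, and $\sig^*\ell\cong o_L\lb z\rb/(\pi^{\r},z)$ has length $\r$, so by d\'evissage $\operatorname{length}(\sig^*Q)=\r\cdot\operatorname{length}(Q)$. Feeding this into the injection $\sig^*Q\hookrightarrow Q$ from your snake sequence yields $\r\cdot\operatorname{length}(Q)\le\operatorname{length}(Q)$, hence $Q=0$ since $\r\ge2$.
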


\begin{proof}
Suppose that $(\ucM,\alpha)$ is a good model of $\uM_L$. In the proof of \ref{GoodRedCrit} we have seen that its completion $\ucMhut:=\ucM\otimes_{A_{o_L,\pi}}A_{o_L,\epspi}$ is a local shtuka at $\eps$. The $F$-equivariant isomorphism $\alpha\:M_L\isoto\cM[\piinv]$ of $A_{o_L,\pi}[\piinv]$-modules induces the isomorphism
$$f:=\alpha\otimes\id_{A_{o_L,\epspi}[\piinv]}\: M_L\otimes_{A_{o_L,\pi}[\piinv]}A_{o_L,\epspi}[\piinv]\isoto \hat{\cM}\otimes_{A_{o_L,\epspi}}A_{o_L,\epspi}[\piinv]$$
which is $F$-equivariant, and satisfies $\cM=f(M_L)\cap\hat{\cM}$, because $A_{o_L,\pi}=A_{o_L,\pi}[\piinv]\cap A_{o_L,\epspi}$. 

\medskip
To see that this functor is fully faithful let $(\ucM,\alpha)$ and $(\ucM',\alpha')$ be good models of $\uM_L$ and let $\hat\beta\:(\ucMhut,f):=(\ucM,\alpha)\otimes_{A_{o_L,\pi}}A_{o_L,\epspi}\isoto(\ucMhut{}',f'):=(\ucM',\alpha')\otimes_{A_{o_L,\pi}}A_{o_L,\epspi}$ be an isomorphism. This means $f'=\hat\beta\circ f$. Applying $\cM=f(M_L)\cap\hat\cM$ and $\cM'=f'(M_L)\cap\hat\cM'$ we see that $\hat\beta(\cM)=\cM'$. Therefore $\beta:=\hat\beta|_\cM:\cM\isoto\cM'$ is the desired isomorphism satisfying $\beta\otimes\id_{A_{o_L,\epspi}}=\hat\beta$. This implies $\alpha'=\beta\circ\alpha$ and the $F$-equivariance of $\beta$, and hence $\beta\:(\ucM,\alpha)\isoto(\ucM',\alpha')$.

\medskip
To prove essential surjectivity, let a local shtuka $\uMhut$ together with an isomorphism $f\:\uM_L\otimes_{A_{o_L,\pi}[\piinv]}A_{o_L,\epspi}[\piinv]\isoto \uMhut[\piinv]$ be given. It remains to show that the $\epspi A_{o_L,\pi}$-adic completion $\ucMhut:=\ucM\otimes_{A_{o_L,\pi}}A_{o_L,\epspi}$ of the good model $\cM=M_L\cap f^\inv(\Mhut)$ gained in the proof of \ref{GoodRedCrit} gives back $\uMhut$. Then we take $\alpha$ as the canonical isomorphism $\id\:\cM\otimes_{A_{o_L,\pi}}A_{o_L,\pi}[\piinv]\isoto M_L$. By construction of $\ucM$, the map $f$ restricts to an embedding $\cM\haken\Mhut$, which in turn induces an $F$-equivariant and $A_{o_L,\epspi}$-linear map $\psi:=f|_{\hat\cM}\: \hat\cM\pfeil \Mhut$, which becomes an isomorphism after inverting $\pi$.
Our aim is to show that already the map $\psi$ is an isomorphism $(\ucM,\id)\otimes_{A_{o_L,\pi}}A_{o_L,\epspi}\isoto(\uMhut,f)$. According to Remark~\ref{RemNotEquiv} we have to use the Frobenius morphisms $F_{\hat\cM}$ and $F_{\Mhut}$ in an essential way.

We know that $\cM$ is finite free over $o_L\la z\ra$ and that $\rk_{o_L\lb z\rb}(\hat\cM)=\rk_{o_L\lb z\rb}(\Mhut)=:s.$
We fix an $o_L\lb z\rb$-basis $\fB$ (resp., $\fC$) of $\hat\cM$ (resp., of $\Mhut$) and let $\bA=\boo_\fC[\psi]_\fB\in o_L\lb z\rb^{s\times s}$ be the matrix which describes $\psi$ with respect to $\fB$ and $\fC$. Likewise, we let $$\bT=\boo_\fB[F_{\hat\cM}]_{\sig^*\fB},\qquad \bT'=\boo_\fC[F_{\Mhut}]_{\sig^*\fC}$$  
be the matrices corresponding to $F_{\hat\cM}$ and $F_{\Mhut}$, so that $\bA\bT=\bT'\sig(\bA)$ by virtue of the $F$-equivariance of $\psi$. In order to see that $\psi$ is an isomorphism, we need to show that $\det(\bA)$ is a unit in $o_L\lb z\rb$. To begin with, an elementary application of the Weierstra{\ss} Division Theorem for $o_L\lb z\rb$ (\cite[VII.3.8.5]{BourbakiCA}) shows that the kernel of the epimorphism $o_L\lb z\rb\pfeil o_L$, $z\gehtauf\zeta$, is generated by $z-\zeta$, so that the latter is a prime element of $o_L\lb z\rb$. Furthermore, recall that $o_L\lb z\rb$, being a regular local ring, is factorial (\cite{Matsu}, 20.3). We know that $\ucMhut$ is a local shtuka, so that $F_{\hat\cM}$ becomes an isomorphism after inverting $z-\zeta$ which means that $\det(\bT)^\inv$ lies in $o_L\lb z\rb[\frac{1}{z-\zeta}]$. Say we have a relation $(z-\zeta)^e=\det(\bT)u$ in $o_L\lb z\rb $, for some $e\geq 0$ and some $u\in o_L\lb z\rb$. By a comparison of powers of $z-\zeta$, we may assume that $u$ is not divisible by $z-\zeta$. In this equation there is only one prime element of $o_L\lb z\rb$ occurring on both sides, which, by factoriality, implies that $u$ has to be a unit in $o_L\lb z\rb$. Let $(z-\zeta)^{e'}=\det(\bT')u'$ be the corresponding relation for the local shtuka $\Mhut$, with a unit $u'\in o_L\lb z\rb^\times$ and some suitable $e'\geq 0$. Since $\hat\cM\pfeil \Mhut$ becomes an isomorphism after inverting $\pi$, we see that $\det(\bA)\in o_L\lb z\rb[\piinv]^\times$. Note that the natural reduction-mod-$z$ map $o_L\lb z\rb\pfeil o_L$, $h\gehtauf h(0)$, induces an epimorphism of abelian groups $o_L\lb z\rb[\frac{1}{\pi}]^\times\pfeil L^\times$, so that the absolute term $\delta:=\det(\bA)(0)$ of $\det(\bA)$ lies in $L^\times$. By virtue of the relations derived above, the equation $\det(\bA)\det(\bT)=\det(\bT')\sig(\det(\bA))$ yields $$\det(\bA)u^\inv(z-\zeta)^e=u'^\inv(z-\zeta)^{e'}\sig(\det(\bA))$$ which modulo $z$ gives $\delta^{q-1} = \frac{u'(0)}{u(0)}(-\zeta)^{e-e'}$ in $L^\times$. Suppose for a moment that $e=e'$. In this case it follows at once that $\delta$ is a unit in $o_L$, so that $\det(\bA)$ is a unit in $o_L\lb z\rb$. Therefore it remains to verify that our assumption $e=e'$ is justified. This can be seen as follows: The reduction-mod-$\pi$ map $o_L\lb z\rb\pfeil \ell\lb z\rb$ is an epimorphism with kernel $\pi o_L\lb z\rb$, and via applying the functor $\cdot\otimes_{o_L\lb z\rb}\ell\lb z\rb$ to $F_{\Mhut}\: \sig^*\Mhut\pfeil \Mhut$ we obtain a commutative diagram
$$\xymatrix{\sig^*\Mhut=\Mhut\otimes_{o_L\lb z\rb,\sig} o_L\lb z\rb \ar[r] \ar[d] & \Mhut \ar[d]\\
\bar{\sig}^*\Mhut/\pi\Mhut=\Mhut/\pi\Mhut \otimes_{\ell\lb z\rb,\bar{\sig}}\ell\lb z\rb \ar[r] & \Mhut/\pi\Mhut}$$
where in the upper row (resp., the bottom row) both modules are finite free of the same rank over $o_L\lb z\rb$ (resp., over $\ell\lb z\rb$) and the arrow is given by $F_{\Mhut}$ (resp., by $\bar{F}=F_{\Mhut}\otimes \id_{\ell\lb z\rb}$). The reduced matrix $\overline{\bT'}\in \ell\lb z\rb^{s\times s}$ describes the map $\bar{F}$ with respect to the $\ell\lb z\rb$-bases $\overline{\sig^*\fC}=\bar{\sig}^*\bar{\fC}$ of $\bar{\sig}^*\Mhut/\pi\Mhut$ and $\bar{\fC}$ of $\Mhut/\pi\Mhut$ respectively, and from what we have seen before, we derive the relation $\det(\overline{\bT'})\overline{u'}=z^{e'}$, i.e.\ $e'=\ord_z(\det(\overline{\bT'})),$ the latter being true since $\overline{u'}\in\ell\lb z\rb^\times$. In particular we have $\det(\overline{\bT'})\in \ell\lb z\rb-\{0\}$. A similar observation for the local shtuka $\hat\cM$ instead of $\Mhut$ shows that $e=\ord_z(\det(\overline{\bT}))$. Let $C=\coker(F_{\hat\cM})$ and $C'=\coker(F_{\Mhut})$. 
Multiplication with the matrix $\overline{\bT'}$ gives rise to a finite presentation 
$\ell\lb z\rb^s\pfeil \ell\lb z\rb^s\pfeil C'/\pi C'\pfeil 0.$
Taking determinants in an equation of the form $\bS_1\overline{\bT'}\bS_2=\Diag(a_1,...,a_d,0,0,...,0)$, where $\bS_1, \bS_2\in\GL_s(\ell\lb z\rb)$ are suitable matrices such that $a_1,...,a_d\in \ell\lb z\rb-\{0\}$ are the elementary divisors of $\overline{\bT'}$ (see \cite{BourbakiA47}, VII.4.5.1), yields that necessarily $d=s$, so that $C'/\pi C'$ is a torsion $\ell\lb z\rb$-module and 
$$C'/\pi C' \cong \ell\lb z\rb/a_1\ell\lb z\rb\oplus ...\oplus \ell\lb z\rb/a_s\ell\lb z\rb \cong  \ell^{n_1}\oplus ...\oplus \ell^{n_s}$$
where $n_j=\ord_z(a_j)$ and $\sum_j n_j=e'$, i.e.\ $e'=\ord_z(\det(\overline{\bT'}))=\rk_\ell(C'/\pi C')=\rk_{o_L}(C'),$
the latter equation being valid since $C'/\pi C'\cong  C'\otimes_{o_L\lb z\rb}\ell\lb z\rb$. Finally, imitating this argument for the local shtuka $\hat\cM$ yields that $e=\ord_z(\det(\overline{\bT}))=\rk_\ell(C/\pi C)=\rk_{o_L}(C).$
So it remains to show that $\rk_{o_L}(C)=\rk_{o_L}(C')$. Indeed, we know that $\psi\: \hat\cM\pfeil \Mhut$ gives back $f$ in the generic fiber, which means that $\psi$ is an isomorphism after inverting $\pi$. Therefore, inverting $\pi$ in the commutative diagram with exact rows
$$\xymatrix{0 \ar[r] & \sig^*(\hat\cM) \ar[r] \ar[d]_{\sig^*\psi} & \hat\cM \ar[r] \ar[d]_\psi & C \ar[r] \ar[d] & 0\\
0 \ar[r] & \sig^*\Mhut \ar[r] & \Mhut \ar[r] & C' \ar[r] & 0}$$ 
exhibits $(\sig^*\psi)[\piinv]=\sig^*(\psi[\piinv])$ and $\psi[\piinv]$ as $o_L\lb z\rb[\piinv]$-linear isomorphisms, so that the Snake Lemma yields $C'[\piinv]\cong  C[\piinv]$, and we obtain
$\rk_{o_L}(C')=\dim_L(C'[\piinv])=\dim_L(C[\piinv])=\rk_{o_L}(C),$
as desired.
\end{proof}

\section{The reduction criterion for Anderson motives}\label{tauSh}

\begin{Definition}
(a) Let $\ucM\in\FMod(A_{o_L})$. Following Gardeyn \cite{GardeynSST}, $\ucM$ is called \emph{$A_{o_L}$-maximal} if for every $\ucN\in\FMod(A_{o_L})$ the canonical map $$\Hom_{\FMod(A_{o_L})}(\ucN,\ucM)\pfeil \Hom_{\FMod(A_L)}(\ucN[\piinv],\ucM[\piinv])$$
is surjective (and hence bijective). 

\smallskip\noindent
(b) An object $\ucM'\in\FMod(A_{o_L,\pi})$ is called \emph{$A_{o_L,\pi}$-maximal} if for every $\ucN'\in\FMod(A_{o_L,\pi})$ the canonical map $$\Hom_{\FMod(A_{o_L,\pi})}(\ucN',\ucM')\pfeil \Hom_{\FMod(A_{o_L,\pi}[\piinv])}(\ucN'[\piinv],\ucM'[\piinv])$$
is surjective (and hence bijective). 

\smallskip\noindent
(c) Let $\uM\in\FMod(A_L)$. An object $\ucM\in\FMod(A_{o_L})$ is called an \emph{$A_{o_L}$-maximal model} for $\uM$ if $\ucM[\piinv]\cong\uM$ inside $\FMod(A_L)$ (i.e.\ $\ucM$ is a \emph{model} for $\uM$) and if $\ucM$ is $A_{o_L}$-maximal. 
Correspondingly, given $\uM'\in\FMod(A_{o_L,\pi}[\piinv])$, an object $\ucM'\in\FMod(A_{o_L,\pi})$ is called an \emph{$A_{o_L,\pi}$-maximal model} for $\uM'$ if $\ucM'[\piinv]\cong\uM'$ inside $\FMod(A_{o_L,\pi}[\piinv])$ and if $\ucM'$ is $A_{o_L,\pi}$-maximal. 
\end{Definition}

The existence of ($A_{o_L}$- and $A_{o_L,\pi}$-)maximal models has been established in \cite{GardeynSST}. 

\begin{Proposition} [{\cite[Proposition~2.13]{GardeynSST}}] \label{Prop1.24}
Let $\uM\in \FMod(A_L)$. Then the following assertions hold:\begin{enumerate}
\item $\uM$ admits an $A_{o_L}$-maximal model, which is unique up to unique isomorphism.
\item If a model $\ucM\in\FMod(A_{o_L})$ of $\uM$ is good in the weak sense of Definition~\ref{Def-goodFMod}, then it is $A_{o_L}$-maximal.
\end{enumerate}
\end{Proposition}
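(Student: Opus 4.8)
This is formal. If $\ucM_{1},\ucM_{2}$ are two $A_{o_L}$-maximal models of $\uM$, the composite isomorphism $\ucM_{1}[\piinv]\cong\uM\cong\ucM_{2}[\piinv]$ in $\FMod(A_{L})$ lifts, by $A_{o_L}$-maximality of $\ucM_{2}$ (applied with $\ucN=\ucM_{1}$), to a morphism $\ucM_{1}\to\ucM_{2}$ in $\FMod(A_{o_L})$; it is injective since it becomes an isomorphism after inverting $\pi$ and $\ucM_{1}$ has no $\pi$-torsion. Symmetrically one gets $\ucM_{2}\to\ucM_{1}$, and the two composites lift the identities, hence equal them by the bijectivity clause in the definition. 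So $\ucM_{1}$ and $\ucM_{2}$ are canonically isomorphic.

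\textbf{Existence in (i): reduction and directedness.} One first produces some model: clearing denominators gives a finitely generated $A_{o_L}$-submodule of $\uM$ spanning it; its reflexive hull is locally free ($A_{o_L}$ is a regular $2$-dimensional domain, and a finitely generated reflexive module over a regular local ring of dimension $\leq 2$ is free); and rescaling by a suitable power of $\pi$, using $\sig(\pi)=\pi^{\r}$, makes $F_{\uM}$ carry it into itself, so we obtain $\ucM_{0}\in\FMod(A_{o_L})$ with $\ucM_{0}[\piinv]=\uM$. Let $\cS$ be the set of locally free, full-rank, $F$-stable $A_{o_L}$-submodules $\ucN\subset\uM$ with $\ucN[\piinv]=\uM$. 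If $\cS$ has a largest element $\ucM^{\max}$, it is the $A_{o_L}$-maximal model: for $\ucN'\in\FMod(A_{o_L})$ and a morphism $\phi\colon\ucN'[\piinv]\to\uM$ in $\FMod(A_{L})$, the module $\phi(\ucN')+\ucM^{\max}$ is finitely generated, $F$-stable and of full rank, so its reflexive hull lies in $\cS$ and contains $\ucM^{\max}$, hence equals it; thus $\phi$ lifts to $\ucN'\to\ucM^{\max}$. So it suffices that $\cS$ be directed and bounded above. \emph{Directedness:} for $\ucN_{1},\ucN_{2}\in\cS$ the sum $P:=\ucN_{1}+\ucN_{2}\subset\uM$ is finitely generated, torsion-free, $F$-stable and of full rank; its reflexive hull $\widetilde{P}=\bigcap_{\height\idp=1}P_{\idp}$ (intersection inside $\uM\otimes_{A_{o_L}}\Frac A_{o_L}$) is then locally free of full rank, contained in $\uM$, and contains $P$ with $\widetilde{P}/P$ of finite length (supported in codimension $\geq 2$, as $P$ is already locally free in codimension $\leq 1$). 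It is $F$-stable: $\sig$ is finite flat (Lemma~\ref{Frob-cocart}), so $\sig^{*}$ is exact and commutes with $(-)^{\vee\vee}$, whence $\sig^{*}\widetilde{P}=(\sig^{*}P)^{\vee\vee}$; the composite $\sig^{*}\widetilde{P}\xrightarrow{F_{\uM}}\uM\twoheadrightarrow\uM/\widetilde{P}$ kills $\sig^{*}P$ (because $F_{\uM}(\sig^{*}P)\subset P$) and so factors through $\sig^{*}(\widetilde{P}/P)$, which has finite length; but $\uM/\widetilde{P}\cong\widetilde{P}\otimes_{A_{o_L}}(A_{L}/A_{o_L})$ has no nonzero finite-length submodule, the only associated prime of $A_{L}/A_{o_L}$ being the non-maximal height-one prime $(\pi)$ (recall $\pi$ is prime in $A_{o_L}$). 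Hence the composite vanishes, $F_{\uM}(\sig^{*}\widetilde{P})\subset\widetilde{P}$, and $\widetilde{P}\in\cS$ dominates $\ucN_{1}$ and $\ucN_{2}$.

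\textbf{The main obstacle.} What is left is to bound $\cS$ from above: all $\ucN\in\cS$ must be contained in one fixed finitely generated $A_{o_L}$-module (equivalently $\bigcup_{\ucN\in\cS}\ucN$ must be finitely generated, as it then lies in $\cS$ and is the maximum). This is the essential content of \cite[Proposition~2.13]{GardeynSST}, and I would follow Gardeyn's route: extend $\uM$ across the place $\infty$ and then across the special fibre to a locally free sheaf with Frobenius on the proper regular relative curve $\cC\times_{\IF}\Spec(o_L)$, allowing $F$ poles only along $\{\infty\}\times\Spec(o_L)$, and then bound the pole order at $\infty$ of any $\ucN\in\cS$ uniformly via finiteness of coherent cohomology (boundedness of Euler characteristics) on the projective curve $\cC_{L}$, so that every member of $\cS$ is squeezed between two fixed locally free sheaves. (Over $A_{o_L}$ alone one already has the germ of the estimate: $\coker(F_{\uM})$ is killed by some $f\in A_{o_L}$ with $\pi\nmid f$, which yields inclusions $f\cdot\sig(f)\cdots\sig^{\,k-1}(f)\cdot\uM\subset F_{\uM}^{k}\bigl((\sig^{k})^{*}\uM\bigr)$; the compactification turns such one-sided relations into a genuine two-sided bound.) With boundedness in hand $\cS$ has a maximum and (i) follows. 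This step is where I expect all the difficulty to lie.

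\textbf{Part (ii).} This does not use (i). Let $\ucM$ be a model of $\uM=\ucM[\piinv]$ that is good in the weak sense, let $\ucN'\in\FMod(A_{o_L})$ and $\phi\colon\ucN'[\piinv]\to\uM$ a morphism in $\FMod(A_{L})$; we must show $\phi(\ucN')\subset\ucM$. Put $\ucP:=\phi(\ucN')+\ucM\subset\uM$ (finitely generated, $F$-stable, $\ucP[\piinv]=\uM$) and $Q:=\ucP/\ucM$ (finitely generated, $\pi$-power-torsion). The inclusions $\sig^{*}\ucM\subset\sig^{*}\ucP$ and $\ucM\subset\ucP$, compatible with the $F$'s (both restrictions of $F_{\uM}$), give a map of short exact sequences
\[
\bigl[\,0\to\sig^{*}\ucM\xrightarrow{F_{\ucM}}\ucM\to\coker F_{\ucM}\to 0\,\bigr]\;\longrightarrow\;\bigl[\,0\to\sig^{*}\ucP\xrightarrow{F_{\ucP}}\ucP\to\coker F_{\ucP}\to 0\,\bigr]
\]
(the rows exact and $\sig^{*}\ucM\to\sig^{*}\ucP$ injective since $\sig^{*}$ is exact, Lemma~\ref{Frob-cocart}), and the snake lemma identifies $\ker\bigl(\coker F_{\ucM}\to\coker F_{\ucP}\bigr)$ with $\ker\bigl(\sig^{*}Q\xrightarrow{\delta}Q\bigr)$, $\delta$ induced by $F_{\uM}$. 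Suppose $Q\neq 0$. Since $\sig(\pi)=\pi^{\r}$ with $\r=|\IF|\geq 2$, the functor $\sig^{*}$ strictly enlarges every nonzero finitely generated $\pi$-power-torsion $A_{o_L}$-module: one measures it by the length over the discrete valuation ring $A_{o_L,(\pi)}$, on which $\sig$ acts by $\pi\mapsto\pi^{\r}$ and so multiplies lengths by $\r$, unless $Q$ is supported in codimension $\geq 2$ and hence of finite length, in which case one uses ordinary length, noting that for a maximal ideal $\idq$ the module $\sig^{*}(A_{o_L}/\idq)=A_{o_L}/\sig(\idq)A_{o_L}$ is killed by $\pi^{\r}$ but not by $\pi$, so has length $\geq 2$, and passing to a composition series. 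Either way $\delta$ is not injective, so $\coker F_{\ucM}$ contains a nonzero $\pi$-power-torsion submodule, hence has $\pi$-torsion. But $\ucM$ being good means $\sigquer^{*}(\ucM/\pi\ucM)\to\ucM/\pi\ucM$ is injective; reducing $0\to\sig^{*}\ucM\xrightarrow{F_{\ucM}}\ucM\to\coker F_{\ucM}\to 0$ modulo $\pi$ (with $\ucM$ and $\sig^{*}\ucM$ being $o_L$-flat and $\sig^{*}\ucM\otimes_{o_L}\ell=\sigquer^{*}(\ucM/\pi\ucM)$) identifies that kernel with $\mathrm{Tor}_{1}^{o_L}(\coker F_{\ucM},\ell)=(\coker F_{\ucM})[\pi]$, which must then be zero — a contradiction. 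Hence $Q=0$, $\phi(\ucN')\subset\ucM$, and $\ucM$ is $A_{o_L}$-maximal.
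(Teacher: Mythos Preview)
The paper itself offers no proof of this proposition; it is stated with a direct citation to Gardeyn~\cite[Proposition~2.13]{GardeynSST}. So there is no argument in the paper to compare against, and your proposal must be judged on its own.

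Your treatment of uniqueness in (i) and of (ii) is correct and complete. The snake-lemma argument for (ii) is clean: the identification $\ker\bigl(\coker F_{\ucM}\to\coker F_{\ucP}\bigr)\cong\ker\bigl(\sig^{*}Q\to Q\bigr)$ together with the strict length growth under $\sig^{*}$ (by~$\r$ at the height-one prime $(\pi)$, and by at least~$2$ on each composition factor in the finite-length case, since $\pi\notin\sig(\idq)A_{o_L}$ for maximal $\idq\ni\pi$) produces nonzero $\pi$-power torsion inside $\coker F_{\ucM}$; this contradicts $(\coker F_{\ucM})[\pi]=0$, which you correctly deduce from goodness via $\mathrm{Tor}_{1}^{o_L}$. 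The directedness argument in (i) via reflexive hulls is also sound: over the two-dimensional regular ring $A_{o_L}$ reflexive is locally free, $\sig^{*}$ commutes with double duals, and the finite-length discrepancy $\sig^{*}(\widetilde{P}/P)$ cannot map nontrivially into $\uM/\widetilde{P}$, whose only associated prime is the non-maximal $(\pi)$.

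For existence in (i) you correctly isolate boundedness of $\cS$ as the essential point and sketch Gardeyn's compactification-over-$\cC\times\Spec(o_L)$ strategy without executing it. That is an honest appraisal, but it leaves your proof of (i) not self-contained: like the paper, you are in the end deferring the crux to \cite{GardeynSST}.
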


The next proposition is a variant of Gardeyn's theory of maximal models.

\begin{Proposition}\label{Prop1.28}
The following assertions hold: 
\begin{enumerate} 
\item Every $\uM\in\FMod(A_{o_L,\pi}[\piinv])$ admits a maximal model, which is unique up to unique isomorphism.
\item If $\uM\in\FMod(A_L)$ is given and if $\ucM\in\FMod(A_{o_L})$ is an $A_{o_L}$-maximal model of $\uM$ then $\ucM\otimes_{A_{o_L}}A_{o_L,\pi}\in\FMod(A_{o_L,\pi})$ is an $A_{o_L,\pi}$-maximal model of $\uM\otimes_{A_L}A_{o_L,\pi}[\piinv]\in\FMod(A_{o_L,\pi}[\piinv])$.
\item Let $\uM\in \FMod(A_{o_L,\pi}[\piinv])$ and let $\ucM\in\FMod(A_{o_L,\pi})$ be a model of $\uM$. If $\ucM$ is a good model in the weak sense of Definition~\ref{Def-goodFMod}, then it is $A_{o_L,\pi}$-maximal.
\end{enumerate}
\end{Proposition}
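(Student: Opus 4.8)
The plan is to transfer Gardeyn's theory of maximal models (Proposition~\ref{Prop1.24}) from $A_{o_L}$ to its $\pi$-adic completion $A_{o_L,\pi}$, exploiting that $A_{o_L}\pfeil A_{o_L,\pi}$ is flat, $\sig$-equivariant, and induces an isomorphism $A_{o_L}/\pi^n\isoto A_{o_L,\pi}/\pi^n$ for every $n$. I would first record a reformulation, valid over $A_{o_L}$ or $A_{o_L,\pi}$ alike: a model $\ucM$ of $\uM$ is maximal if and only if it admits no \emph{proper $F$-stable enlargement}, i.e.\ there is no finitely generated submodule $\cM'\subseteq\uM$ with $\cM\subsetneq\cM'$, $\cM'[\piinv]=\uM$ and $F_M(\sig^*\cM')\subseteq\cM'$. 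For one direction, apply the $\Hom$-surjectivity with $\ucN'=\ucM'$ and $\phi=\id$ to get $\cM'\subseteq\cM$; for the other, given an $F$-equivariant $\phi\:\ucN'[\piinv]\pfeil\ucM[\piinv]$, the lattice $\cM+\phi(N')$ is $F$-stable (by $F$-equivariance of $\phi$ and $F_M(\sig^*\cM)\subseteq\cM$), hence equals $\cM$, so $\phi$ restricts to a morphism $\ucN'\pfeil\ucM$. The $\Hom$-maps are clearly injective, and the $F$-stable lattices above a fixed one form a filtered family (the sum of two of them is again one).

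\emph{Parts (i) and (iii).} The existence of $A_{o_L,\pi}$-maximal models is already recorded above from \cite{GardeynSST}; uniqueness up to unique isomorphism then follows formally from the reformulation (two maximal models map isomorphically to one another via the identity on generic fibres, using that there is no $\pi$-torsion). For completeness I would also note the construction: pick any finitely generated projective $A_{o_L,\pi}$-lattice $\cM_0\subseteq M$ with $\cM_0[\piinv]=M$ and some $N_0\geq0$ with $F_M(\sig^*\cM_0)\subseteq\pi^{-N_0}\cM_0$; since $\sig(\pi)=\pi^\r$ and $\r\geq2$, the lattice $\cM:=\pi^{N_0}\cM_0$ satisfies $F_M(\sig^*\cM)=\pi^{\r N_0}F_M(\sig^*\cM_0)\subseteq\pi^{(\r-1)N_0}\cM_0\subseteq\pi^{N_0}\cM_0=\cM$, so it is a model, and the union of the filtered family of $F$-stable lattices above $\cM$ is the maximal one once that family is bounded --- which, as in Gardeyn's proof, it is, the only ring-theoretic inputs being that $A_{o_L,\pi}$ is a noetherian $\pi$-adically separated integral domain with residue ring $A_\ell$ and that $\sig$ is finite flat (Lemma~\ref{Frob-cocart}). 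For (iii): a good model in the weak sense has $\coker(F_\cM)$ with no $\pi$-torsion (a short snake-lemma computation, in the spirit of Remark~\ref{RemGoodModels}), and Gardeyn's argument for Proposition~\ref{Prop1.24}(ii) --- which I would copy, being attentive to the twist $\sig(\pi)=\pi^\r$ --- shows that such a model has no proper $F$-stable enlargement; alternatively, restriction of scalars along the finite flat, $\sig$-equivariant embedding $o_L\la z\ra\haken A_{o_L,\pi}$ of \eqref{CProj-Abb} reduces (iii) to the case $A=\IF[z]$, on using that such restriction carries $F$-stable $A_{o_L,\pi}$-lattices to $F$-stable $o_L\la z\ra$-lattices, preserves the weak goodness of a model, and that $o_L\la z\ra$-maximality of the restricted model forces $A_{o_L,\pi}$-maximality (an extending map is automatically $A_{o_L,\pi}$-linear by torsion-freeness).

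\emph{Part (ii)} is the substantive point. Let $\ucM$ be the $A_{o_L}$-maximal model of $\uM\in\FMod(A_L)$ and set $\ucM':=\ucM\otimes_{A_{o_L}}A_{o_L,\pi}$, $\uM':=\uM\otimes_{A_L}A_{o_L,\pi}[\piinv]$, so $\ucM'$ is a model of $\uM'$. By the reformulation I must exclude a proper $F$-stable enlargement $\ucM'\subsetneq\ucM''$; fix $k\geq0$ with $\cM''\subseteq\pi^{-k}\cM'$. Since $W:=\cM''/\cM'$ is killed by $\pi^k$ and $A_{o_L}/\pi^k\isoto A_{o_L,\pi}/\pi^k$, the submodule $W$ of $\pi^{-k}\cM'/\cM'=(\pi^{-k}\cM/\cM)\otimes_{A_{o_L}}A_{o_L,\pi}=\pi^{-k}\cM/\cM$ is already an $A_{o_L}$-submodule; let $\cM''_0\subseteq\pi^{-k}\cM$ be its preimage over $A_{o_L}$, so $\cM\subsetneq\cM''_0$ and, by flatness, $\cM''_0\otimes_{A_{o_L}}A_{o_L,\pi}=\cM''$. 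It remains to see that $\cM''_0$ is $F$-stable over $A_{o_L}$, for then it is a proper $F$-stable enlargement of $\ucM$, contradicting $A_{o_L}$-maximality. Now $\sig^*\cM''_0\subseteq\sig^*(\pi^{-k}\cM)=\pi^{-k\r}\sig^*\cM$ (again $\sig(\pi)=\pi^\r$), hence $F_M(\sig^*\cM''_0)\subseteq\pi^{-k\r}F_M(\sig^*\cM)\subseteq\pi^{-k\r}\cM$, so that $X:=\bigl(F_M(\sig^*\cM''_0)+\cM''_0\bigr)/\cM''_0$ is killed by $\pi^{k\r}$; on the other hand $X\otimes_{A_{o_L}}A_{o_L,\pi}=0$ because $\sig^*$ commutes with the base change (the squares in Lemma~\ref{Frob-cocart} are cocartesian) and $F_{M'}(\sig^*\cM'')\subseteq\cM''$. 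Since $A_{o_L}/\pi^{k\r}\isoto A_{o_L,\pi}/\pi^{k\r}$, a $\pi^{k\r}$-torsion module vanishing after this base change is already zero, so $F_M(\sig^*\cM''_0)\subseteq\cM''_0$, as needed.

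The main obstacle is (ii): the content is that $\pi$-adic completion neither creates nor destroys $F$-stable lattices lying in a bounded $\pi$-adic window of $\uM'$; the subtlety is that $\sig$ enlarges such windows by the factor $\r$, which is why one must carry $\pi^{-k\r}$ rather than $\pi^{-k}$ and rely on the isomorphisms $A_{o_L}/\pi^n\isoto A_{o_L,\pi}/\pi^n$. A secondary, routine point is the verification that $A_{o_L,\pi}$ (equivalently $o_L\la z\ra$) has the ring-theoretic properties that make Gardeyn's boundedness argument and his proof of Proposition~\ref{Prop1.24}(ii) go through verbatim.
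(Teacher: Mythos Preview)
Your proposal is correct. The paper's own proof is a bare citation to Gardeyn~\cite[3.3(i), 3.4(i), 2.13(ii)]{GardeynSST}, with the remark that his arguments---stated for the rings $\Gamma(\fA(\infty),\cO_{\fA(\infty)})$ and its integral subring---carry over verbatim to $A_{o_L,\pi}[\piinv]$ and $A_{o_L,\pi}$. You are, in effect, supplying the content that the paper defers: your reformulation of maximality as ``no proper $F$-stable enlargement'' is Gardeyn's working definition, and your treatment of (i) and (iii) follows his line closely. For (ii) your argument is more direct than a transport of Gardeyn's 3.4(i) across a change of rings: you descend an $F$-stable lattice from $A_{o_L,\pi}$ to $A_{o_L}$ using only the isomorphisms $A_{o_L}/\pi^n\isoto A_{o_L,\pi}/\pi^n$ and the cocartesian squares of Lemma~\ref{Frob-cocart}, which isolates precisely why $\pi$-adic completion preserves maximality. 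This buys you a self-contained proof and makes explicit the role of the twist $\sig(\pi)=\pi^\r$.

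One small point to make explicit: in the forward direction of your reformulation you invoke the $\Hom$-surjectivity with $\ucN'=\ucM'$, but your enlargement $\cM'$ is only assumed finitely generated, not locally free, so it is not a priori an object of $\FMod$. The fix is immediate---replace $\cM'$ by the maximal model itself, which is locally free by construction and contains $\cM'$---but it is worth a sentence, since the same issue recurs when you contradict $A_{o_L}$-maximality of $\ucM$ via $\cM_0''$ in part (ii).
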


\begin{proof}
For (i) (resp.\ (ii); resp.\ (iii)), see \cite{GardeynSST}, 3.3(i) (resp. 3.4(i); resp.\ 2.13(ii)). 
Note that strictly speaking Gardeyn proves these statements for the rings $\Gamma(\fA(\infty),\cO_{\fA(\infty)})$ instead of $A_{o_L,\pi}[\piinv]$ and $\Gamma(\fA(\infty),\cO_{\fA(\infty)})\cap A_{o_L,\pi}$ instead of $A_{o_L,\pi}$. His arguments carry over literally to our rings.
\end{proof}

We may conclude:

\begin{Proposition}\label{GoodModelFModule}
In the weak sense of Definition~\ref{Def-goodFMod} a Frobenius $A_L$-module $\uM$ admits a good model over $A_{o_L}$ if and only if $\uM\otimes_{A_L}A_{o_L,\pi}[\piinv]\in\FMod(A_{o_L,\pi}[\piinv])$ admits a good model over $A_{o_L,\pi}$. If this is the case, the functor $(\ucM,\alpha)\mapsto (\ucM\otimes_{A_{o_L}}A_{o_L,\pi},\,\alpha\otimes\id_{A_{o_L,\pi}[\piinv]})$ is an equivalence of categories between the good models of $\uM$ and the good models of $\uM\otimes_{A_L}A_{o_L,\pi}[\piinv]$.  
\end{Proposition}

\begin{proof}
First suppose that $\uM$ admits a good model $\ucM\in\FMod(A_{o_L})$. It follows that $\ucM$ is an $A_{o_L}$-maximal model of $\uM$. Furthermore, its image $\ucM\otimes_{A_{o_L}}A_{o_L,\pi}$ inside $\FMod(A_{o_L,\pi})$ is an $A_{o_L,\pi}$-maximal model of $\uM\otimes_{A_L}A_{o_L,\pi}[\piinv]$. Since the reduction of $\ucM$ is canonically isomorphic to the reduction of $\ucM\otimes_{A_{o_L}}A_{o_L,\pi}$ by Proposition~\ref{ExpAlgGoodRed}, it follows that the latter is a good model. 

Conversely, suppose that $\uM\otimes_{A_L}A_{o_L,\pi}[\piinv]$ admits a good model $\underline{\widehat\cM}\in\FMod(A_{o_L,\pi})$. Necessarily $\underline{\widehat\cM}$ is a maximal model by Proposition~\ref{Prop1.28}(iii). We know that there is an $A_{o_L}$-maximal model $\ucM\in\FMod(A_{o_L})$ of $\uM$ such that $\ucM\otimes_{A_{o_L}}A_{o_L,\pi}\cong \underline{\widehat\cM}$, and that the reduction of $\underline{\widehat\cM}$ is canonically isomorphic to the reduction of $\ucM$ by Propositions~\ref{Prop1.24}, \ref{Prop1.28}(ii) and \ref{ExpAlgGoodRed}. Since $\underline{\widehat\cM}$ is a good model, so is $\ucM$. This proves the first statement and it also proves essential surjectivity of the functor.

To prove full faithfulness let $(\ucM,\alpha)$ and $(\ucM',\alpha')$ be good models of $\uM$ and let $\hat\beta:\ucM\otimes_{A_{o_L}}A_{o_L,\pi}\isoto\ucM'\otimes_{A_{o_L}}A_{o_L,\pi}$ be an isomorphism in $\FMod(A_{o_L,\pi})$ satisfying $\alpha'\otimes\id=\hat\beta\circ(\alpha\otimes\id)$. Since $A_{o_L}=A_L\cap A_{o_L,\pi}$ inside $A_{o_L,\pi}[\piinv]$, we can recover $\cM$ as $\cM=\alpha(M)\cap\cM\otimes_{A_{o_L}}A_{o_L,\pi}$. This implies $\hat\beta(\cM)=\cM'$ and $\beta:=\hat\beta|_{\cM}$ is the desired isomorphism $\beta\:\ucM\isoto\ucM$ with $\alpha'=\beta\circ\alpha$. This proves full faithfulness.
\end{proof}

For Anderson $A$-motives Proposition~\ref{GoodModelFModule} and Theorem~\ref{ThmGoodModels} imply the following

\begin{Corollary}\label{AlgGoodMod}
Let ${\uM}$ be an Anderson $A$-motive over $L$. Then in the strong sense of Definition~\ref{Def-good}, $\uM$ admits a good model $\ucM$ if and only if the associated analytic Anderson $A(1)$-motive $\uM\otimes_{A_L}A_{o_L,\pi}[\piinv]$ admits a good model $\ucM'$. If this is the case, the functor $(\ucM,\alpha)\mapsto (\ucM\otimes_{A_{o_L}}A_{o_L,\pi},\,\alpha\otimes\id_{A_{o_L,\pi}[\piinv]})$ is an equivalence of categories between the good models of $\uM$ and the good models of $\uM\otimes_{A_L}A_{o_L,\pi}[\piinv]$.\qed
\end{Corollary}

This corollary together with Theorem~\ref{GoodRedCrit} and Corollary~\ref{GoodModel-LocalShtuka} implies the following criterion for good reduction of Anderson $A$-motives, which can be regarded as an analog of the reduction criteria for abelian varieties of Grothendieck~\cite[Proposition~IX.5.13]{SGA7} and de Jong~\cite[2.5]{dJ98}.

\begin{Corollary}\label{CorGoodRedCrit}
Let $\uM$ be an Anderson $A$-motive over $L$ such that $\coker(F_{\uM})$ is annihilated by $\fJ^d$ for some $d$. Then the following assertions are equivalent:
\begin{enumerate}
\item $\uM$ admits a good model $(\ucM,\alpha)$ in the strong sense of Definition~\ref{Def-good}, i.e.\ there is an object $\ucM\in \FMod(A_{o_L})$ such that $\coker(F_{\ucM})$ is a finite free $o_L$-module and is annihilated by $\fJ^d$, together with an isomorphism $\alpha\:\uM\isoto\ucM[\piinv]$ inside $\FMod(A_L)$;
\item There is 
an effective local shtuka $\uMhut$ at $\eps$ over $o_L$ such that $\coker(F_{\hat{M}})$ is annihilated by $\fJ^d$, and an isomorphism $\uM \otimes_{A_L} A_{o_L,\epspi}[\piinv]\cong \uMhut[\piinv]$
inside $\FMod(A_{o_L,\epspi}[\piinv])$.
\end{enumerate}
Moreover, there is an equivalence of categories 
\begin{eqnarray*}
\left\{\begin{array}{l} \text{good models $(\ucM,\alpha)$ of }\uM\text{ in the}\\ \text{sense of Definitions~\ref{Def-good} and \ref{Def-goodFMod}}\end{array}\right\} & \stackrel{\sim}{\longleftrightarrow} &
\left\{\begin{array}{l}
\text{pairs }(\uMhut,f)\text{ consisting of}\\
\text{\textbullet\ a local shtuka }\uMhut\text{ at }\eps\text{ over }o_L,\text{ and}\\
\text{\textbullet\ an isomorphism in }\FMod(A_{o_L,\epspi}[\piinv])\\
\quad f\: \uM\otimes_{A_L} A_{o_L,\epspi}[\piinv]\isoto \uMhut[\piinv]\end{array}\right\} \\[2mm]
(\ucM,\alpha) & \longmapsto & (\ucM,\alpha)\otimes_{A_{o_L}}A_{o_L,\epspi}\,,
\end{eqnarray*}
where on the right-hand side a \emph{morphism} of pairs $\hat\beta\:(\uMhut,f)\isoto(\uMhut',f')$ is defined to be an isomorphism of local shtukas $\hat\beta\:\uMhut\isoto\uMhut'$ satisfying $f'=\hat\beta\circ f$. \qed
\end{Corollary}

{\small

}

\vfill

\begin{minipage}[t]{0.5\linewidth}
\noindent
Urs Hartl\\
Universit\"at M\"unster\\
Mathematisches Institut \\
Einsteinstr.~62\\
D -- 48149 M\"unster
\\ Germany
\\[1mm]
\href{http://www.math.uni-muenster.de/u/urs.hartl/index.html.en}{www.math.uni-muenster.de/u/urs.hartl/}
\end{minipage}
\begin{minipage}[t]{0.45\linewidth}
\noindent
Simon H\"usken\\
Universit\"at M\"unster\\
Mathematisches Institut \\
Einsteinstr.~62\\
D -- 48149 M\"unster
\\ Germany
\\[1mm]
\end{minipage}

\end{document}